\def\BibTeX{{\rm B\kern-.05em{\sc i\kern-.025em b}\kern-.08em
    T\kern-.1667em\lower.7ex\hbox{E}\kern-.125emX}}
\definecolor{myblue}{RGB}{135, 206, 235}
\definecolor{darkgreen}{rgb}{0.0, 0.5, 0.0}
\definecolor{aliceblue}{rgb}{0.94, 0.97, 1.0}
\definecolor{deepmagenta}{rgb}{0.8, 0.0, 0.8}
\definecolor{inchworm}{rgb}{0.7, 0.93, 0.36}
\definecolor{airforceblue}{rgb}{0.36, 0.54, 0.66}
\definecolor{violet}{rgb}{0.53, 0.0, 0.69}
\definecolor{rosewood}{rgb}{0.4, 0.0, 0.04}
\definecolor{olive}{rgb}{0.5, 0.5, 0.0}
\definecolor{navajowhite}{rgb}{1.0, 0.87, 0.68}
\definecolor{otterbrown}{rgb}{0.4, 0.26, 0.13}
\definecolor{orange-red}{rgb}{1.0, 0.27, 0.0}
\definecolor{mintcream}{rgb}{0.96, 1.0, 0.98}
\definecolor{cyan}{rgb}{0.0, 1.0, 1.0}
\definecolor{aqua}{rgb}{0.0, 1.0, 1.0}
\definecolor{aquamarine}{rgb}{0.5, 1.0, 0.83}
\definecolor{bubbles}{rgb}{0.91, 1.0, 1.0}
\definecolor{azure}{rgb}{0.0, 0.5, 1.0}
\definecolor{dartmouthgreen}{rgb}{0.05, 0.5, 0.06}
\definecolor{deepjunglegreen}{rgb}{0.0, 0.29, 0.29}
\definecolor{brightgreen}{rgb}{0.4, 1.0, 0.0}
\theoremstyle{plain}
\newtheorem{theorem}{Theorem}
\newtheorem{lemma}[theorem]{Lemma}
\newtheorem{proposition}[theorem]{Proposition}
\newtheorem{corollary}[theorem]{Corollary}
\theoremstyle{definition}
\newtheorem{definition}[theorem]{Definition}
\definecolor{darkgreen}{rgb}{0.0, 0.5, 0.0}
\definecolor{brightgreen}{rgb}{0.4, 1.0, 0.0}
\newcolumntype{?}{!{\vrule width 1pt}}
\DeclareMathOperator{\Max}{Max}
\DeclareMathOperator{\Min}{Min}
\begin{document}

\title{Quantum implications in orthomodular posets\\
}
\author{Kadir~Emir\inst{1}\email{emir@math.muni.cz} \and %
Jan~Paseka\inst{1}\email{paseka@math.muni}}
\institute{{Department of Mathematics and Statistics} \\
	\textit{Faculty of Science, Masaryk University}\\
	Brno, Czech Republic}

\maketitle

\begin{abstract}
We show that, for every orthogonal lub-complete poset $\mathbf P$, we can introduce 
multiple-valued  implications  sharing
properties with quantum implications presented for orthomodular lattices by 
Kalmbach. We call them {\em classical implication}, {\em Kalmbach implication}, {\em non-tolens implication}, 
{\em Dishkant implication} and 
{\em Sasaki implication}. 

If the classical implication satisfies the {\em order property}, then the corresponding orthologic 
becomes classical  and vice versa. If the Kalmbach  or  non-tolens or Dishkant  or Sasaki implication 
meets the {order property}, then the corresponding orthologic 
becomes quantum  and vice versa. A related result for the modus ponens rule is obtained. 

\end{abstract}

\keywords{Multiple-valued  implication, classical implication, Kalmbach implication, 
non-tolens implication, Dishkant implication, Sasaki implication, orthomodular poset, modus ponens rule.}

\section{Introduction}

Orthomodular posets were introduced,  to  formalize 
event structures of quantum mechanical systems faithfully; see,
e.g.,\ \cite{B} and \cite{OML} for details. Note  that a bounded 
poset $(P,\leq,{}',0,1)$ with complementation which is an antitone involution, is called
{\em orthomodular} if it is orthogonal, i.e.,\ $x\vee y$ exists for $x,y\in P$
whenever $x\leq y'$, and it satisfies the orthomodular law
\begin{align*}
	x\leq y\ \text{implies}\ x\vee(y\wedge x')=y.\tag{OM}
\end{align*}

The implication is the most productive connective in every logic 
because it enables one to derive new propositions from 
given ones.
But it is not a straightforward task to implement the connective implication in the propositional logic based on an orthomodular poset. The main reason is that orthomodular posets are only partial lattices where the lattice operation join need not exist for elements being neither comparable nor orthogonal. Hence implication is considered a partial operation. In this case, the chosen avenue may cause problems in developing the corresponding logic.

In \cite{CL22}, Chajda and L\"anger developed another approach, where the considered orthomodular posets are of finite height. They introduced an everywhere-defined connective implication that satisfies the properties usually asked in classical and non-classical logics. Moreover, they could 
 to derive a Gentzen system for the axiomatization of this logic.

Namely, the implication $x\rightarrow y$ for given entries $x$ and $y$ need not be an element. Still, it   
may be a {\em subset of the orthomodular poset} in question such that this subset contains only 
the {\em maximal values}. Hence its  elements are mutually incomparable. It means that one cannot
prefer one element of this set with respect to the other elements. 
If the orthomodular poset is a lattice then the implication defined there coincides with the implication usually introduced in orthomodular lattices, e.g., \cite{OML} and \cite{MP}. In the case when several maximal values of $x\rightarrow y$ exist, then this may not cause problems since such  ``unsharp'' reasoning in quantum mechanics can be found, e.g.,\ in \cite{GG}.

Hence, if orthomodular posets should serve
as an algebraic axiomatization of certain orthologics of quantum mechanics (quantum logics), we
search for a suitable definition of  implication-sharing
properties with quantum implications introduced for orthomodular lattices by 
Kalmbach \cite{OMLo}, 
that are listed in the following table:
\begin{align*} 
	\begin{tabular}{ l l }
		Symbol \,\,\, Definition & Beran's number, name \\ \midrule
		\cellcolor{LimeGreen!50} $a \rightarrow_C b = a' \vee b$ & \cellcolor{LimeGreen!50} 94, \,\, Classical arrow \\
		\cellcolor{Cerulean!50} $a \rightarrow_S b =a' \vee (a \wedge b)$ & \cellcolor{Cerulean!50} 78, \,\, Sasaki arrow \\
		\cellcolor{Cerulean!50} $a \rightarrow_D b = b \vee (a' \wedge b')$ & \cellcolor{Cerulean!50}  46, \,\, Dishkant arrow \\
		\cellcolor{Cerulean!50} $a \rightarrow_K b = (a' \wedge b) \vee (a' \wedge b') \vee (a \wedge (a' \vee b))$ & \cellcolor{Cerulean!50}  30, \,\, Kalmbach arrow \\
		\cellcolor{Cerulean!50} $a \rightarrow_N b = (a \wedge b) \vee (a' \wedge b) \vee (b' \wedge (a' \vee b))$ & \cellcolor{Cerulean!50}  62, \,\, Non-tolens arrow \\
		\cellcolor{Cerulean!50} $a \rightarrow_R b = (a \wedge b) \vee (a' \wedge b) \vee (a' \wedge b')$ & \cellcolor{Cerulean!50}  14, \,\, Relevance arrow \\
	\end{tabular}
\end{align*}

All of these six implications coincide in Boolean algebras. Furthermore, they also coincide in an orthomodular lattice when $a$ and $b$ commutes.

The implications $\rightarrow_i$ where $i \in \{C,S,D,K,N,R\}$ can be characterized as those orthomodular lattice operations which satisfy the Birkhoff - von Neumann criteria
\begin{align}\label{BvN}
	a \rightarrow_i b = 1 \Leftrightarrow a \leq b
\end{align}

Moreover, these six implications give rise to the corresponding quantum
conjunctions $\wedge_i$ and disjunctions $\vee_i$ defined as follows:
\begin{align*}
	a \vee_i b &= a' \rightarrow_i b \\
	a \wedge_i b &= (a \rightarrow_i b')'
\end{align*}
for all  $i \in \{C,S,D,K,N,R\}$.


The paper is structured as follows. 
After this introduction, Section~\ref{Preliminaries} 
provides some notions and notations that will be used in the article. 

In Section~\ref{classical}, we show that an orthologic in which 
$$
\vdash  \phi \rightarrow_C \psi \text{ if and only if } \phi\vdash\psi
$$
holds is a classical logic. In the language of 
 posets with an antitone involution satisfying particular natural conditions,  this means that whenever 
\begin{align*}
a \rightarrow_C b=1 \text{ if and only if } a\leq b\tag{OPC}
\end{align*}
holds, then the poset is a Boolean algebra  and vice versa. We 
then say that the implication satisfies 
the {\em order property}, and the motivation is that the implication must comply 
with the natural underlying order structure of truth values. 

We present a corresponding result for Kalmbach implication $\rightarrow_K$ 
and non-tolens implication $\rightarrow_N$ in Section~\ref{Kalman} and 
for Dishkant implication $\rightarrow_D$ 
and Sasaki implication $\rightarrow_S$ in Section~\ref{SasDis}. 
We show that if 
\begin{align*}
a \rightarrow_K b=1 \text{ if and only if } a\leq b\tag{OPK}
\end{align*}
or 
\begin{align*}
a \rightarrow_N b=1 \text{ if and only if } a\leq b\tag{OPN}
\end{align*}
or 
\begin{align*}
a \rightarrow_D b=1 \text{ if and only if } a\leq b\tag{OPD}
\end{align*}
or 
\begin{align*}
a \rightarrow_S b=1 \text{ if and only if } a\leq b\tag{OPS}
\end{align*}
holds, then the poset is an orthomodular poset and vice versa. 
We close the section with the statement formulated for  posets 
with an antitone involution satisfying particular natural conditions
that an orthologic in which the 
following {\em modus ponens rule} 
\begin{align*}
	\alpha\vdash\beta\quad  \&\quad   %
	\alpha \rightarrow \beta\vdash \phi \rightarrow \psi \Rightarrow \phi\vdash\psi
\tag{MP}
\end{align*}
holds is a quantum logic for $\rightarrow\in \{\rightarrow_K, \rightarrow_N, \rightarrow_D, \rightarrow_S\}$, 
and a classical logic for $\rightarrow\in \{\rightarrow_C\}$ and vice versa.

The above results generalize the work 
of Megill and Pavi\v ci\v c both in classical and quantum situations 
(see \cite{mphpa98,MP}). 

We then conclude in Section~\ref{Conclusions}.

A preliminary short version of the results of this paper was published 
in the Proceedings of the 53rd ISMVL (see \cite{EP}).

In what follows, we take the basic concepts and
results on lattices and posets  for granted. For more
information on these topics, we direct the reader to 
the monographs \cite{Bi} by G.~Birkhoff and \cite{OML} by G.~Kalmbach.

\section{Basic concepts}\label{Preliminaries}

Let $(P,\leq)$ be a poset $a,b\in P$ and $A,B\subseteq P$. We say $A\leq B$ if $x\leq y$ for all $x\in A$ and $y\in B$. Instead of $\{a\}\leq \{b\}$, $\{a\}\leq B$ and $A\leq \{b\}$, 
we simply write $a\leq b$, $a\leq B$ and $A\leq b$, respectively.

The sets \begin{align*} L(A) & :=\{x\in
	P\mid x\leq A\}, \\ U(A) & :=\{x\in P\mid A\leq x\} \end{align*} are called
the {\em lower cone} and {\em upper cone} of $A$, respectively. Instead of
$L(A\cup B)$, $L(A\cup\{b\})$, $L(\{a,b\})$ and $L\big(U(A)\big)$ we write
$L(A,B)$, $L(A,b)$, $L(a,b)$ and $LU(A)$, respectively. Analogously, we
proceed in similar cases. On $2^P$ we introduce two binary relations
$\leq_1$ and $\leq_2$ as follows: 
\begin{align*} A\leq_1B \ &\text{ if and only
		if} \begin{array}{c}\text{ for every }x\in A\text{ there exists some }\\
		y\in B\text{ with }x\leq y,
		\end{array} \\
	A\leq_2B \ &\text{ if and only if} \begin{array}{c}\text{  for every }y\in B\text{ there exists some
	}\\x\in A\text{ with }x\leq y. 	\end{array}  \\ 
\end{align*} A unary operation ${}'$ on $P$ is
called an {\em antitone involution} if \[ x,y\in P\text{ and }x\leq y\text{
	together imply }y'\leq x' \] and if it satisfies the identity \[ x''\approx x.
\] 
Two elements $x,y$ of $P$ are called {\em orthogonal} to each other,
shortly $x\perp y$, if $x\leq y'$ or, equivalently, $y\leq x'$.
By an {\em orthogonal poset}, we mean a bounded poset $(P,\leq,{}',0,1)$ with an antitone involution satisfying the following condition:
\[
x,y\in P\text{ and }x\perp y\text{ imply that }x\vee y\text{ exists}.
\]
Here and in the following, $x\vee y$ denotes the supremum of $x$ and $y$.

The concept of a paraorthomodular poset was introduced in \cite{CFLLP} as a generalization of the concept of a paraorthomodular lattice introduced in \cite{GLP17}. Recall that a bounded {\em poset} $\mathbf P=(P,\leq,{}',0,1)$ with an antitone involution is called {\em paraorthomodular} if it satisfies the following condition:
\begin{align*}
	x,y\in P, x\leq y \text{ and } x'\wedge y=0 \text{ together imply } x=y.\tag{P}
	\end{align*}

 Condition (P) is evidently equivalent with the following condition: 
\begin{align*}
	x,y\in P, x\leq y \text{ and } x\vee y'=1 \text{ together imply } x=y.\tag{P'}
	\end{align*}

Moreover, if the paraorthomodular poset $\mathbf P$ is lattice-ordered then it is called a \emph{paraorthomodular lattice}.

An orthogonal paraorthomodular poset is called {\em sharply paraorthomodular} (see \cite{CFLLP}). Of course, any paraorthomodular lattice is sharply paraorthomodular.

It is worth noticing that every { orthomodular poset}  is paraorthomodular, but there are examples of paraorthomodular posets (or even lattices, indeed) which are not orthomodular, see \cite{CFLLP}.


\begin{center}
\begin{tabular}{c c c c}
\phantom{ccc}&
	\setlength{\unitlength}{6mm}
	\begin{picture}(4,8)
		\put(2,1){\circle*{.3}}
		\put(1,3){\circle*{.3}}
		\put(3,3){\circle*{.3}}
		\put(1,5){\circle*{.3}}
		\put(3,5){\circle*{.3}}
		\put(2,7){\circle*{.3}}
		\put(2,1){\line(-1,2)1}
		\put(2,1){\line(1,2)1}
		\put(1,5){\line(0,-1)2}
		\put(1,5){\line(1,2)1}
		\put(3,5){\line(0,-1)2}
		\put(3,5){\line(-1,2)1}
		\put(1.85,.3){$0$}
		\put(.35,2.85){$a$}
		\put(3.4,2.85){$b$}
		\put(.35,4.85){$a'$}
		\put(3.4,4.85){$b'$}
		\put(1.85,7.4){$1$}
		\put(-2.802,-.75){{\rm Fig.~1 \  A non-orthocomplemented }}
  \put(-2.802,-1.35){{\phantom{\rm Fig.~1} \  paraorthomodular lattice}}
	\end{picture}&\phantom{cccccccccccccc} &%
 	\setlength{\unitlength}{6mm}
	\begin{picture}(4,8)
		\put(2,1){\circle*{.3}}
		\put(1,3){\circle*{.3}}
		\put(3,3){\circle*{.3}}
		\put(1,5){\circle*{.3}}
		\put(3,5){\circle*{.3}}
		\put(2,7){\circle*{.3}}
		\put(2,1){\line(-1,2)1}
		\put(2,1){\line(1,2)1}
		\put(1,5){\line(0,-1)2}
		\put(1,5){\line(1,2)1}
		\put(3,5){\line(0,-1)2}
		\put(3,5){\line(-1,2)1}
		\put(1.85,.3){$0$}
		\put(.35,2.85){$a$}
		\put(3.4,2.85){$b$}
		\put(.35,4.85){$b'$}
		\put(3.4,4.85){$a'$}
		\put(1.85,7.4){$1$}
		\put(-2.802,-.75){{\rm Fig.~2 \  An orthocomplemented }}
  \put(-2.802,-1.35){{\phantom{\rm Fig.~2} \  non-orthomodular lattice}}
	\end{picture}
 \end{tabular}
\end{center}

\vspace*{4mm}

A bounded poset $(P,\leq,{}',0,1)$ with an antitone involution  is called 
an {\em orthocomplemented poset} if $x\vee x'=1$ for every $x\in P$.

An element $y \in P$ of a poset $\mathbf P=(P,\leq)$ is said to 
be a {\em complement} of $x \in P$ 
if $L(x, y) =L(P)$ and $U(x, y) = U(P)$. $\mathbf P$ is said to be 
{\em complemented} if each element of $P$ has a complement in $\mathbf P$. 

A {poset} $\mathbf P$ is called {\em distributive} if one of the following equivalent LU-identities is satisfied:
\begin{align*}
	L\big(U(x,y),z\big) & \approx LU\big(L(x,z),L(y,z)\big), \\
	U\big(L(x,z),L(y,z)\big) & \approx UL\big(U(x,y),z\big), \\
	U\big(L(x,y),z\big) & \approx UL\big(U(x,z),U(y,z)\big), \\
	L\big(U(x,z),U(y,z)\big) & \approx LU\big(L(x,y),z\big).
\end{align*}

$\mathbf P$ is said to be {\em Boolean} if it is distributive and complemented.

From \cite[Example 2.4]{CKL22}, we know that the poset 
depicted in Fig.~3 is a non-lattice Boolean poset which is not  an orthogonal poset since $a\perp c$, but $a\vee c$ does not exist. 

\vspace*{-1mm}

\begin{center}
	\setlength{\unitlength}{7mm}
	\begin{picture}(8,10)
		\put(4,1){\circle*{.3}}
		\put(1,3){\circle*{.3}}
		\put(3,3){\circle*{.3}}
		\put(5,3){\circle*{.3}}
		\put(7,3){\circle*{.3}}
		\put(1,5){\circle*{.3}}
		\put(7,5){\circle*{.3}}
		\put(1,7){\circle*{.3}}
		\put(3,7){\circle*{.3}}
		\put(5,7){\circle*{.3}}
		\put(7,7){\circle*{.3}}
		\put(4,9){\circle*{.3}}
		\put(4,1){\line(-3,2)3}
		\put(4,1){\line(-1,2)1}
		\put(4,1){\line(1,2)1}
		\put(4,1){\line(3,2)3}
		\put(4,9){\line(-3,-2)3}
		\put(4,9){\line(-1,-2)1}
		\put(4,9){\line(1,-2)1}
		\put(4,9){\line(3,-2)3}
		\put(1,3){\line(0,1)4}
		\put(1,3){\line(1,1)4}
		\put(3,3){\line(-1,1)2}
		\put(3,3){\line(1,1)4}
		\put(5,3){\line(-1,1)4}
		\put(5,3){\line(1,1)2}
		\put(7,3){\line(-1,1)4}
		\put(7,3){\line(0,1)4}
		\put(1,5){\line(1,1)2}
		\put(7,5){\line(-1,1)2}
		\put(3.85,.3){$0$}
		\put(.35,2.85){$a$}
		\put(2.35,2.85){$b$}
		\put(5.4,2.85){$c$}
		\put(7.4,2.85){$d$}
		\put(.35,4.85){$e$}
		\put(7.4,4.85){$e'$}
		\put(.35,6.85){$d'$}
		\put(2.35,6.85){$c'$}
		\put(5.4,6.85){$b'$}
		\put(7.4,6.85){$a'$}
		\put(3.85,9.4){$1$}
		\put(-0.32,-.75){{\rm Fig.~3 \quad  A non-orthogonal Boolean poset}}
	\end{picture}
\end{center}

\vspace*{4mm}

We have the following easy observation. 

\begin{lemma}\label{boiom}
	Every orthogonal Boolean  poset $\mathbf P$ is orthomodular. 
\end{lemma}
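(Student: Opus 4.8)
The plan is to use that $\mathbf P$ is \emph{already} orthogonal, so that only the orthomodular law (OM) remains to be verified, and to exploit distributivity together with the antitone involution to run the usual Boolean-lattice computation at the level of lower and upper cones.

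First I would establish that $\mathbf P$ is orthocomplemented, i.e. that the involution serves as the complement: $L(x,x')=\{0\}$ and $U(x,x')=\{1\}$ for every $x$. Since the involution is order-reversing and interchanges $0$ and $1$, it carries the lower cone $L(x,x')$ onto the upper cone $U(x',x'')=U(x,x')$, so the two conditions are equivalent and it suffices to treat one of them. Because $\mathbf P$ is orthogonal and $x\perp x'$ always holds, $x\vee x'$ exists; the content is that this join equals $1$ (equivalently $x\wedge x'=0$). Reconciling the given antitone involution with the complement furnished by the Boolean structure — using that complements are unique in a distributive poset — so that $\mathbf P$ becomes orthocomplemented with $x\vee x'=1$ and $x\wedge x'=0$, is the step I expect to be the \textbf{main obstacle}: it is exactly here that distributivity and complementedness must be combined with the involution, and where the orthogonality hypothesis does its work.

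Once orthocomplementation is in hand, I would check that the partial operations occurring in (OM) are in fact defined. Fix $x\le y$. Then $x\le y=(y')'$, so $x\perp y'$ and orthogonality yields $x\vee y'$; applying the involution (De Morgan) gives that $y\wedge x'=(x\vee y')'$ exists. As $y\wedge x'\le x'$ we have $(y\wedge x')\perp x$, so by orthogonality $x\vee(y\wedge x')$ exists as well, and the statement of (OM) makes sense.

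Finally I would prove the equality $x\vee(y\wedge x')=y$ with a single distributive LU-identity. Using $U(x,x')=\{1\}$ and $x\le y$, I compute $L(y)=L\big(U(x,x'),y\big)=LU\big(L(x,y),L(x',y)\big)=LU\big(L(x),L(y\wedge x')\big)=L\big(x\vee(y\wedge x')\big)$, where the final equality uses that $U\big(L(x)\cup L(y\wedge x')\big)=U(x,y\wedge x')$ together with the existence of the join established above. Equal principal ideals force the two elements to coincide, which is (OM); hence $\mathbf P$ is orthomodular. The LU-manipulations in this last step are routine once the relevant cones have been identified, so the real weight of the argument rests on the orthocomplementation step flagged above.
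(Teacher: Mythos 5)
Your proof follows essentially the same route as the paper's: the existence arguments (from $x\leq y$ get $x\perp y'$, use orthogonality for $x\vee y'$, the involution for $x'\wedge y=(x\vee y')'$, and orthogonality again for $x\vee(x'\wedge y)$) and the closing computation $L(y)=L\bigl(U(x,x'),y\bigr)=LU\bigl(L(x,y),L(x',y)\bigr)=LU(x,x'\wedge y)=L\bigl(x\vee(x'\wedge y)\bigr)$ reproduce the published proof almost verbatim; the paper merely finishes by passing to upper cones, $U(y)=U(x,x'\wedge y)$, rather than by comparing principal lower cones, which is the same move.

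The one point of divergence is the step you flag as the main obstacle, and there your instinct is only half right: it is indeed the delicate point, but it cannot be discharged the way you propose. Uniqueness of complements in a distributive poset does \emph{not} force the given antitone involution to coincide with complementation. On the four-element Boolean lattice $\{0,a,b,1\}$, the map fixing $a$ and $b$ and swapping $0$ and $1$ is an antitone involution; this poset is distributive, complemented, and (being a lattice) orthogonal, yet $a\vee a'=a\neq1$, and (OM) fails for $a\leq1$ since $a\vee(1\wedge a')=a$. So your step 1 is not provable from the stated hypotheses; it is a reading convention. The paper resolves it exactly that way: its notion of orthomodularity is formulated for a ``complementation which is an antitone involution,'' and the opening line of its proof (``the complementation on $\mathbf P$ is uniquely determined; let us denote it by ${}'$'') simply identifies the symbol ${}'$ with the unique Boolean complementation, so that $U(x,x')=\{1\}$ and $L(x,x')=\{0\}$ hold by fiat rather than by argument. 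Once that convention is adopted, your remaining two steps are complete, correct, and identical to the paper's.
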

\begin{proof} Recall that the complementation on $\mathbf P$ is uniquely determined. Let us denote it 
	by ${}{'}$.
	Let $x, y\in P, x\leq y$. Then $x'\wedge y$ and $x\vee (x'\wedge y)$ exist. We compute:
	\begin{align*}
	L(y)&=L(1,y)=L(x\vee x',y)=L(U(x,x'),y)\\
	&=LU(L(x,y),L(x',y))=LU(x,x'\wedge y).
	\end{align*}
Since $U(y)=U(x,x'\wedge y)$,	we obtain that $y=x\vee (x'\wedge y)$.
\end{proof}

An involutive poset $\mathbf P$ is called {\em weakly Boolean} \cite{tkadlec} if for 
every $a, b \in P$, the condition 
$a\wedge b=a\wedge b'=0$ implies $ a= 0$.

Let $\mathbf P=(P,\leq)$ be a poset. In the following, for every 
subset $A$ of $P$, let $\Max A$ and $\Min A$ denote the set of all maximal and minimal elements of $A$, respectively. 

We say that a poset $\mathbf P$ is 
\begin{enumerate}
	\item {\em mub-complete} \cite{abramsky} if for every upper bound $x$
	of a finite  subset $M$ of $P$, there is a minimal upper bound of $M$ below $x$, 
	\item {\em lub-complete}  if for every lower bound $x$
	of a finite subset $M$ of $P$, there is a maximal lower bound of $M$ above $x$, 
	\item {\em mlub-complete}  if it is both mub-complete and lub-complete. 
\end{enumerate}

If $\mathbf P=(P,\leq,{}',0,1)$ is a bounded poset with an antitone involution, then 
$\mathbf P$ is mub-complete if and only if it is lub-complete.

Clearly, every finite poset, every poset of a finite height, and every lattice is mlub-complete.

The bounded poset $\mathbf P$ has the {\em property of maximality} \cite{tkadlec} if,
for every $a, b \in P$, the set $L(a,b)$ has a maximal element. Every lub-complete poset has the property of maximality.

From \cite[Theorem 3.1]{CKL22}, we know that 
the poset depicted in Fig.~4 is 
the smallest non-lattice orthomodular poset which is unique up to isomorphism. 

\begin{center}
	\setlength{\unitlength}{5.695mm}
	\begin{picture}(16,8)
		\put(8,1){\circle*{.3}}
		\put(1,3){\circle*{.3}}
		\put(3,3){\circle*{.3}}
		\put(5,3){\circle*{.3}}
		\put(7,3){\circle*{.3}}
		\put(9,3){\circle*{.3}}
		\put(11,3){\circle*{.3}}
		\put(13,3){\circle*{.3}}
		\put(15,3){\circle*{.3}}
		\put(1,5){\circle*{.3}}
		\put(3,5){\circle*{.3}}
		\put(5,5){\circle*{.3}}
		\put(7,5){\circle*{.3}}
		\put(9,5){\circle*{.3}}
		\put(11,5){\circle*{.3}}
		\put(13,5){\circle*{.3}}
		\put(15,5){\circle*{.3}}
		\put(8,7){\circle*{.3}}
		\put(1,3){\line(0,2)2}
		\put(1,3){\line(1,1)2}
		\put(1,3){\line(2,1)4}
		\put(1,3){\line(4,1)8}
		\put(3,3){\line(-1,1)2}
		\put(3,3){\line(0,1)2}
		\put(3,3){\line(2,1)4}
		\put(3,3){\line(4,1)8}
		\put(5,3){\line(-2,1)4}
		\put(5,3){\line(4,1)8}
		\put(7,3){\line(-3,1)6}
		\put(7,3){\line(4,1)8}
		\put(9,3){\line(-3,1)6}
		\put(9,3){\line(2,1)4}
		\put(11,3){\line(-4,1)8}
		\put(11,3){\line(2,1)4}
		\put(13,3){\line(-4,1)8}
		\put(13,3){\line(-3,1)6}
		\put(13,3){\line(0,1)2}
		\put(13,3){\line(1,1)2}
		\put(15,3){\line(-3,1)6}
		\put(15,3){\line(-2,1)4}
		\put(15,3){\line(-1,1)2}
		\put(15,3){\line(0,1)2}
		\put(8,1){\line(-7,2)7}
		\put(8,1){\line(-5,2)5}
		\put(8,1){\line(-3,2)3}
		\put(8,1){\line(-1,2)1}
		\put(8,1){\line(1,2)1}
		\put(8,1){\line(3,2)3}
		\put(8,1){\line(5,2)5}
		\put(8,1){\line(7,2)7}
		\put(8,7){\line(-7,-2)7}
		\put(8,7){\line(-5,-2)5}
		\put(8,7){\line(-3,-2)3}
		\put(8,7){\line(-1,-2)1}
		\put(8,7){\line(1,-2)1}
		\put(8,7){\line(3,-2)3}
		\put(8,7){\line(5,-2)5}
		\put(8,7){\line(7,-2)7}
		\put(7.85,.25){$0$}
		\put(.85,2.3){$a$}
		\put(2.85,2.3){$b$}
		\put(4.85,2.3){$c$}
		\put(6.85,2.3){$d$}
		\put(8.85,2.3){$e$}
		\put(10.85,2.3){$f$}
		\put(12.85,2.3){$g$}
		\put(14.85,2.3){$h$}
		\put(.8,5.4){$h'$}
		\put(2.8,5.4){$g'$}
		\put(4.8,5.4){$f'$}
		\put(6.8,5.4){$e'$}
		\put(8.8,5.4){$d'$}
		\put(10.8,5.4){$c'$}
		\put(12.8,5.4){$b'$}
		\put(14.8,5.4){$a'$}
		\put(7.85,7.4){$1$}
		\put(1.252,-.75){{\rm Fig.~4}\quad The smallest non-lattice orthomodular poset}
	\end{picture}
\end{center}
\vspace*{4mm}

Let $\mathbf P$ be an orthomodular poset. For every $a, b \in P$ with
$a \leq b$, let us denote $b-a=(b\wedge a')= (b' \vee a)' \in P$
(according to the definition of an orthomodular poset).
According to the orthomodular law, 
$b = a \vee (b-a)$ for every $a, b \in P$ with
$a \leq b$ and, moreover, $a \perp (b-a)$.

A pair $a, b\in P$ of elements of  an orthomodular poset $\mathbf P$ is 
called {\em compatible} if there exist three mutually orthogonal elements 
$c, d, e\in P$ such that $a=c\vee d$ and $b=d\vee e$. We write 
$a\mathrel{C}b$. 

Note that every compatible pair $a, b$ is contained in a Boolean algebra included in $\mathbf P$ with operations 
inherited from $\mathbf P$ (see {}\cite[Theorem 1.3.23]{PP}). 

Recall also that every weakly Boolean orthomodular poset with the property
of maximality is a Boolean algebra \cite[Theorem 4.2]{tkadlec}.

We sometimes extend an operator $*\colon P^2\rightarrow2^P$ to a binary operation on $2^P$ by
\[
A*B:=\bigcup_{x\in A,y\in B}(x*y)
\]
for all $A,B\in2^P$.

\section{Classical implication in posets} \label{classical}

In this section, we study the multiple-valued  classical implication in posets.

Let $(P,\leq,{}',0,1)$ be a lub-complete bounded poset  with an antitone involution. 
We introduce the following everywhere defined operator $\rightarrow_C\colon P^2\rightarrow2^P$ as follows:

\begin{align*}
	x\rightarrow_C y:=\Min U(x',y).\tag{IC}
	\end{align*}

We say that $\rightarrow_C$ is a {\em classical implication}.

For the operator $\rightarrow_C$ defined above, we can prove the following properties.

\begin{proposition}\label{bprop1}
	Let $(P,\leq,{}',0,1)$ be a lub-complete bounded poset  with an antitone involution  
	and $x,y, z\in P$. Then the following hold:
	\begin{enumerate}[{\rm(i)}]
		\item $y\leq x\rightarrow_C y$,
		\item $x\leq y$ implies $y\rightarrow_C z\leq_2 x\rightarrow_C z$,
			\item $x\rightarrow_C 1\approx 1$, $1\rightarrow_C x\approx x$,
		\item $x\rightarrow_C 0\approx x'$, $0\rightarrow_C x\approx 1$,
		\item If $x'\vee y$ exists then $x\rightarrow_C y=x'\vee y$.	
	\end{enumerate}
\end{proposition}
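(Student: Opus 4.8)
The plan is to read every item directly off the definition $x\rightarrow_C y=\Min U(x',y)$, exploiting the fact that $\Min U(x',y)\subseteq U(x',y)$ consists of upper bounds of $\{x',y\}$, together with explicit descriptions of the relevant upper cones. For (i) I would simply observe that each $w\in x\rightarrow_C y=\Min U(x',y)$ lies in $U(x',y)$ and hence satisfies $w\geq y$; since $y\leq A$ abbreviates ``$y\leq w$ for all $w\in A$'', this is exactly $y\leq x\rightarrow_C y$.

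Items (iii), (iv) and (v) are pure computations of upper cones followed by an application of $\Min$. Using that $0,1$ are the bounds and that ${}'$ is an involution with $1'=0$, $0'=1$, one finds $U(x',1)=\{1\}$, $U(1',x)=U(0,x)=U(x)$, $U(x',0)=U(x')$ and $U(0',x)=U(1,x)=\{1\}$. Passing to minimal elements yields the singletons $\{1\}$, $\{x\}$, $\{x'\}$ and $\{1\}$ respectively, which (identifying a singleton with its element, as the paper's convention permits) are precisely the four asserted identities. For (v), if $x'\vee y$ exists it is by definition the least element of $U(x',y)$, so this cone has a minimum and therefore $\Min U(x',y)=\{x'\vee y\}$.

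The only item requiring genuine work is (ii). Assuming $x\leq y$, antitonicity of ${}'$ gives $y'\leq x'$. To establish $y\rightarrow_C z\leq_2 x\rightarrow_C z$ I must produce, for each $w\in x\rightarrow_C z=\Min U(x',z)$, an element $v\in y\rightarrow_C z=\Min U(y',z)$ with $v\leq w$. Such a $w$ satisfies $w\geq x'\geq y'$ and $w\geq z$, so $w\in U(y',z)$; that is, $w$ is an upper bound of the two-element set $\{y',z\}$. At this point I would invoke the hypothesis recorded in the preliminaries: a bounded poset with an antitone involution is mub-complete if and only if it is lub-complete, so $\mathbf P$ is mub-complete. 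Applying mub-completeness to $\{y',z\}$ and its upper bound $w$ yields a minimal upper bound $v$ of $\{y',z\}$ with $v\leq w$, i.e.\ $v\in\Min U(y',z)=y\rightarrow_C z$ and $v\leq w$, as required.

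I expect this last passage to be the main obstacle, and it is the step where the lub-completeness assumption is indispensable: it is exactly what lets one descend from ``$w$ is an upper bound of $\{y',z\}$'' to ``$w$ dominates some minimal upper bound of $\{y',z\}$''. Without mub-completeness the set $\Min U(y',z)$ need not be cofinal below $w$, and the $\leq_2$ comparison could fail; the remaining four items, by contrast, are immediate once the governing upper cones are written down.
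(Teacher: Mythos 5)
Your proof is correct and follows essentially the same route as the paper's: items (i) and (iii)--(v) are read directly off the cones $U(x',y)$, and item (ii) uses $y'\leq x'$ to place each $w\in\Min U(x',z)$ in $U(y',z)$ and then descends to a minimal upper bound. Your explicit appeal to mub-completeness (equivalent to lub-completeness for bounded posets with an antitone involution) in step (ii) is exactly the step the paper leaves implicit, so you have if anything been more careful than the original.
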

\begin{proof}
	\begin{enumerate}[(i):]
		\item Let $u\in \Min U(x',y)$. Then $y\leq u$.
	\item Let $x\leq y$ and $u\in \Min U(x',z)$. Since $y'\leq x'$ 
	we obtain $u\geq y',z$. Hence there is $v\in \Min U(y',z)$ such that $v\leq u$.
	\item We have $x\rightarrow_C 1= x'\vee 1=1$ and 
	$1\rightarrow_C x=0\vee x=x$.
	\item We compute $x\rightarrow_C 0= x'\vee 0=x'$ and 
	$0\rightarrow_C x= 1\vee x=1$. 
	\item It is transparent. 
	\end{enumerate}		
\end{proof}

\begin{theorem}\label{bnocmeg5}
		Let $\mathbf P=(P,\leq,{}',0,1)$ be an orthogonal 
	lub-complete poset. Then the following conditions are equivalent:
	\begin{enumerate}[{\rm(i)}]
		\item $\mathbf P$ is an orthocomplemented  poset.
		\item For all $x, y\in P$
			$$
		x\leq y \text{ implies	} x\rightarrow_C y=1.
		$$
	\end{enumerate}
\end{theorem}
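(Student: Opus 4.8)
The plan is to reduce the set-valued equation $x \rightarrow_C y = 1$ to a statement about the upper cone $U(x',y)$, and then to exploit orthogonality. First I would record the elementary reformulation: since $\Min U(x',y)$ collects the minimal upper bounds of $\{x',y\}$ and $1$ is the greatest element of $\mathbf P$, one has $x \rightarrow_C y = \Min U(x',y) = \{1\}$ if and only if $U(x',y) = \{1\}$. Indeed, $1$ always lies in $U(x',y)$, and any element of $U(x',y)$ distinct from $1$ would lie strictly below $1$, thereby preventing $1$ from being a minimal upper bound. Note that this translation uses only that $1$ is the top element, not lub-completeness; it turns both implications into manipulations of common upper bounds.

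For the direction $(i) \Rightarrow (ii)$, I would assume $\mathbf P$ is orthocomplemented and fix $x \le y$. Take any $u \in U(x',y)$. Then $u \ge x'$ and, since $x \le y \le u$, also $u \ge x$; thus $u$ is a common upper bound of $x$ and $x'$. As $\mathbf P$ is orthocomplemented, $x \vee x' = 1$ exists and is the least upper bound of $\{x,x'\}$, whence $u \ge x \vee x' = 1$, i.e.\ $u = 1$. Therefore $U(x',y) = \{1\}$, and by the reformulation $x \rightarrow_C y = 1$.

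For the converse $(ii) \Rightarrow (i)$, the key move is the specialization $y := x$. Since $x \le x$ always holds, hypothesis (ii) yields $x \rightarrow_C x = \Min U(x',x) = \{1\}$, that is $U(x',x) = \{1\}$. Now I would invoke orthogonality: because $x \le x = x''$, we have $x \perp x'$, so the orthogonal poset $\mathbf P$ guarantees that the supremum $x \vee x'$ exists. An existing supremum is the unique minimal upper bound, so $\Min U(x',x) = \{x \vee x'\}$; comparing with $\Min U(x',x) = \{1\}$ gives $x \vee x' = 1$. As $x$ was arbitrary, $\mathbf P$ is orthocomplemented.

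The argument is short, and the only point requiring care is the converse: one must notice that orthogonality is exactly what forces $x \vee x'$ to exist as a genuine join (rather than merely a set of incomparable minimal upper bounds), so that the single value $1$ on the right-hand side of (ii) can be identified with $x \vee x'$. I expect this identification---together with the reformulation of $x \rightarrow_C y = 1$ as $U(x',y) = \{1\}$---to carry the substantive content, the remaining steps being routine order-theoretic bookkeeping.
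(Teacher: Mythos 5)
Your proof is correct and follows essentially the same route as the paper: both directions reduce $x\rightarrow_C y=1$ to $U(x',y)=\{1\}$, with the forward implication amounting to the paper's containment $U(x',y)\subseteq U(x',x)=\{1\}$ and the converse obtained by specializing $y:=x$ to get $x\vee x'=1$. Your explicit remark that the reformulation $\Min U(x',y)=\{1\}\Leftrightarrow U(x',y)=\{1\}$ needs only that $1$ is the top element is a nice clarification, but the substance matches the paper's argument.
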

\begin{proof} (i) $\Rightarrow$ (ii): Let $x, y\in P$. Assume that 
	$x\leq y$. Then  $\{1\}=U(x',x)\supseteq U(x',y)$. 
 Hence $x\rightarrow_C y=1$. 
 
 \noindent{}(ii) $\Rightarrow$ (i): Let $x\in P$. Then $x\leq x$ and 
 $x\rightarrow_C x=1$. We obtain $\{1\}=U(x',x)$, i.e., 
 $1= x\vee x'$. 
   Hence also $x\wedge x'=0$. 
\end{proof}

\begin{proposition}\label{bmeg1}
	Let $\mathbf P=(P,\leq,{}',0,1)$ be an orthogonal  lub-complete poset.
	Then the following conditions satisfy {\rm(i)} $\Rightarrow$ {\rm (ii)} $\Rightarrow$ {\rm(iii)}:
\begin{enumerate}[{\rm(i)}]
		\item $\mathbf P$ is a Boolean  poset.
		\item For all $x, y\in P$
			$$
		 x\rightarrow_C y=1 \text{ implies	} x\leq y.
		$$
  \item  $\mathbf P$ is a weakly Boolean  paraorthomodular poset.
		\end{enumerate}
	
\end{proposition}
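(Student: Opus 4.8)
The plan is to establish the two implications separately, working with the classical implication $x \rightarrow_C y = \Min U(x',y)$ throughout. First, for (i) $\Rightarrow$ (ii), I would assume $\mathbf P$ is Boolean and suppose $x \rightarrow_C y = 1$, which by the definition (IC) means $\Min U(x',y) = \{1\}$, i.e.\ $U(x',y) = \{1\} = U(P)$. The goal is to deduce $x \leq y$. Since $\mathbf P$ is Boolean it is distributive, so I would exploit an LU-identity to rewrite $L(x,y')$. The natural move is to compute $LU(x',y)$: from $U(x',y) = U(P)$ we get $LU(x',y) = L(U(P)) = L(P) = \{0\}$. Using distributivity I would express $L(x,y')$ (or a related lower cone) in terms of $LU$ of pieces that collapse to $\{0\}$, forcing $x \wedge y' = 0$ in the sense that $L(x,y') = \{0\}$. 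Because $\mathbf P$ is also orthogonal and Boolean, Lemma~\ref{boiom} gives that $\mathbf P$ is orthomodular, so it is paraorthomodular and satisfies (P): from $x' \wedge $ (something) $= 0$ together with the appropriate comparability I would extract $x \leq y$. The cleanest route is probably to show directly that $U(x',y) = \{1\}$ forces $x' \perp$-type behavior and then apply the complement uniqueness and distributivity to conclude $x \leq y$.

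For the implication (ii) $\Rightarrow$ (iii) I would need to derive two things: that $\mathbf P$ is weakly Boolean, and that it is paraorthomodular. For \emph{weakly Boolean}, I would take $a,b \in P$ with $a \wedge b = a \wedge b' = 0$ and aim to show $a = 0$. The strategy is to translate these meet conditions into statements about upper cones that, via (ii), yield an inequality forcing $a \leq 0$. Specifically, I expect $a \wedge b = 0$ and $a \wedge b' = 0$ to combine so that $U(a', b)$ and $U(a', b')$ together pin down $U(a',\text{anything})$, letting me apply the contrapositive form of (ii): if $x \not\leq y$ then $x \rightarrow_C y \neq 1$, i.e.\ $\Min U(x',y) \neq \{1\}$. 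Choosing $x,y$ cleverly (likely $x = a$ and $y = 0$, using part (iv) of Proposition~\ref{bprop1} that $a \rightarrow_C 0 = a'$) should show that the hypotheses collapse to $a' = 1$, whence $a = 0$.

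For \emph{paraorthomodularity}, I would verify condition (P): assume $x \leq y$ and $x' \wedge y = 0$, and show $x = y$. Here I would use (ii) in the direction that produces equalities: from $x' \wedge y = 0$ I would argue that $U(x, y')$ or the relevant upper cone equals $\{1\}$, so that some implication $y \rightarrow_C x$ (or $y' \rightarrow_C x'$) evaluates to $1$, and then (ii) gives $y \leq x$; combined with $x \leq y$ this yields $x = y$. The key computational input is again Proposition~\ref{bprop1}, particularly the behavior of $\rightarrow_C$ under the antitone involution and the identity $x \rightarrow_C y = x' \vee y$ when the join exists (part (v)).

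The main obstacle I anticipate is the first implication (i) $\Rightarrow$ (ii): passing from the purely order-theoretic Boolean hypothesis (phrased through the four $LU$-distributivity identities and the cone-based definition of complement) to the single inequality $x \leq y$ requires choosing exactly the right distributive identity and correctly handling the fact that joins and meets are only partial. In particular, manipulating $\Min U(x',y) = \{1\}$ into a usable meet-equals-zero statement and then invoking (P) through Lemma~\ref{boiom} is delicate, since I must avoid assuming the existence of joins that need not exist in a non-lattice Boolean poset. The weakly-Boolean and paraorthomodular verifications in (ii) $\Rightarrow$ (iii), by contrast, should follow more routinely once the correct substitutions into $\rightarrow_C$ are identified.
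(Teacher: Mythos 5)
Your plan for the second implication is close to the paper's proof, but your plan for (i) $\Rightarrow$ (ii) --- the very step you flag as the main obstacle --- contains a concrete error and a structural dead end. The computation ``$LU(x',y)=L(U(P))=L(P)=\{0\}$'' is false: $U(P)=\{1\}$, so $L(U(P))=L(1)=P$, whereas $L(P)=\{0\}$; thus $LU(x',y)=P$, which carries no information. You can in fact get $L(x,y')=\{0\}$ correctly from $U(x',y)=\{1\}$ by the involution alone, but the intended continuation --- pass through Lemma~\ref{boiom} to orthomodularity, hence paraorthomodularity, and extract $x\leq y$ from (P) --- cannot work: condition (P) has the form ``$u\leq v$ and $u'\wedge v=0$ imply $u=v$,'' so it \emph{presupposes} exactly the comparability you are trying to prove, and in a bare orthomodular poset $x\wedge y'=0$ does not imply $x\leq y$ (in the orthomodular lattice $\mathrm{MO}_2$ with atoms $a,b$ one has $a\wedge b'=0$ but $a\not\leq b$). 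Distributivity must be applied directly, and the paper does this in one line: from $x\rightarrow_C y=1$ and mub-completeness (equivalent to lub-completeness here) one gets $U(x',y)=\{1\}$, i.e.\ $x'\vee y=1$, whence $L(x)=L(U(x',y),x)=LU(L(x',x),L(y,x))=LU(L(x,y))=L(x,y)\subseteq L(y)$, using the identity $L(U(a,b),c)\approx LU(L(a,c),L(b,c))$, the fact that $L(x,x')=\{0\}$ (the antitone involution is the unique complementation), and $LUL=L$; hence $x\leq y$ with no appeal to orthomodularity and no joins beyond those guaranteed. Your own sketch could be repaired the same way, applying the distributive identity to $L(U(y,y'),x)$ once $L(x,y')=\{0\}$ is in hand, but as written the step fails.

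For (ii) $\Rightarrow$ (iii), your paraorthomodularity argument is correct and is essentially the paper's (the paper checks the equivalent condition (P'): from $x\leq y$ and $x\vee y'=1$ one gets $U(y',x)=\{1\}$, so $y\rightarrow_C x=1$ and (ii) gives $y\leq x$, hence $x=y$). For weak Booleanness, however, your proposed substitution $x=a$, $y=0$ is off target: the hypotheses $a\wedge b=a\wedge b'=0$ never produce $a\rightarrow_C 0=1$ directly, and Proposition~\ref{bprop1}(iv) plays no role here. The paper instead dualizes \emph{both} hypotheses to $a'\vee b=1=a'\vee b'$, so that $a\rightarrow_C b=1=a\rightarrow_C b'$; then (ii) applied twice yields $a\leq b$ and $a\leq b'$, hence $b,b'\leq a'$, so $1=a'\vee b\leq a'$, giving $a'=1$ and $a=0$. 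So the correct inputs to (ii) are the pairs $(a,b)$ and $(a,b')$, not $(a,0)$; with that fix the second half of your plan goes through exactly as in the paper.
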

\begin{proof}  {\rm(i)} $\Rightarrow$ {\rm (ii)}:
	Let $x, y\in P$ and $x\rightarrow_C y=1$. Then $x'\vee y=1$.  Hence 
	\begin{align*}
		L(x)&=L(U(x',y), x)=LU(L(x',x), L(y,x))=L(x,y)\subseteq L(y).
	\end{align*}
	
	We conclude that $x\leq y$.

 \noindent{}{\rm(ii)} $\Rightarrow$ {\rm (iii)}: Let $x, y\in P$, $x\leq y$ and $x\vee y'=1$. Then 
 $y\rightarrow_C x=1$. From (ii) we conclude that $y\leq x$, i.e., $x=y$ and 
 $\mathbf P$ is paraorthomodular.

 Assume now that $x\wedge y=x\wedge y'=0$. Then $x'\vee y'=1=x'\vee y$. Hence 
  $x\rightarrow_C y=1=x\rightarrow_C y'$. We conclude that $x\leq y$ and $x\leq y'$, i.e., $1=x'\vee y'\leq x'$. 
  Therefore $x=0$. 
\end{proof}	

Given the above result, it is natural to ask whether some converse of it holds. The answer is in the following theorem.

\begin{theorem}\label{bmeg3}
	 Let $\mathbf P=(P,\leq,{}',0,1)$ be an orthogonal 
	lub-complete poset. Then the following conditions are equivalent:
	\begin{enumerate}[{\rm(i)}]
		\item $\mathbf P$ is a Boolean  poset.
		\item For all $x, y\in P$
        $$
	x\leq y \text{ if and only if	} x\rightarrow_C y=1.
	$$
	\item $\mathbf P$ is 
	a Boolean algebra.
	\end{enumerate}
\end{theorem}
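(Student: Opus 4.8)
The plan is to establish the cycle (iii) $\Rightarrow$ (i) $\Rightarrow$ (ii) $\Rightarrow$ (iii). The first link is immediate: a Boolean algebra is a complemented distributive lattice, and for a lattice the LU-distributive identities defining a distributive poset reduce to ordinary lattice distributivity, so a Boolean algebra is in particular a Boolean poset.

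For (i) $\Rightarrow$ (ii) I would first note that, since $\mathbf P$ is a Boolean poset, its complementation is uniquely determined and, following the convention of Lemma \ref{boiom}, is identified with the given antitone involution ${}'$. In particular $x'$ is a complement of $x$, so $U(x,x')=U(P)=\{1\}$ and $L(x,x')=L(P)=\{0\}$; that is, $\mathbf P$ is orthocomplemented. Theorem \ref{bnocmeg5} then supplies the implication $x\le y\Rightarrow x\rightarrow_C y=1$, while the implication (i) $\Rightarrow$ (ii) of Proposition \ref{bmeg1} supplies the converse $x\rightarrow_C y=1\Rightarrow x\le y$. Combining the two yields exactly (OPC), i.e.\ condition (ii).

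The substance of the theorem lies in (ii) $\Rightarrow$ (iii). Assuming (OPC), Theorem \ref{bnocmeg5} in the direction (ii) $\Rightarrow$ (i) shows $\mathbf P$ is orthocomplemented, and Proposition \ref{bmeg1} in the direction (ii) $\Rightarrow$ (iii) shows $\mathbf P$ is a weakly Boolean paraorthomodular poset. The remaining task is to promote paraorthomodularity to the full orthomodular law. Given $x\le y$, I would argue that the relevant bounds exist: from $x\le y=y''$ we get $x\perp y'$, so orthogonality yields $p:=x\vee y'$, and applying ${}'$ to this supremum gives $y\wedge x'=p'$; since $y\wedge x'\le x'$ we have $x\perp(y\wedge x')$, whence $w:=x\vee(y\wedge x')$ exists and $w\le y$. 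A short upper-cone computation then shows $w\vee y'=1$: any common upper bound of $y'$ and $w$ dominates $x$, $y'$ and $p'=y\wedge x'$, hence dominates both $p=x\vee y'$ and $p'$, and is therefore equal to $1$ by orthocomplementation. Paraorthomodularity in the form (P$'$), applied to $w\le y$ together with $w\vee y'=1$, forces $w=y$, which is the orthomodular law (OM); equivalently one may feed $w\vee y'=1$ back into (OPC) to obtain $y\le w$ directly. Thus $\mathbf P$ is an orthomodular poset, and being lub-complete it enjoys the property of maximality; the cited result \cite[Theorem 4.2]{tkadlec}, that a weakly Boolean orthomodular poset with the property of maximality is a Boolean algebra, now yields (iii).

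I expect the orthomodularity step to be the main obstacle. The difficulty is purely order-theoretic: in a poset one cannot freely form joins and meets, so each of $x\vee y'$, $y\wedge x'$ and $x\vee(y\wedge x')$ must be produced through orthogonality and the antitone behaviour of ${}'$, and the identity $w\vee y'=1$ has to be checked directly at the level of upper cones rather than by lattice manipulation. Once the orthomodular law is secured, the passage to a Boolean algebra is a direct appeal to the quoted structural theorem, and the three-way equivalence closes.
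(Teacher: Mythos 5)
Your proposal is correct and follows essentially the same route as the paper: the cycle through Theorem~\ref{bnocmeg5}, Proposition~\ref{bmeg1} and Tkadlec's theorem \cite{tkadlec}, with (iii)~$\Rightarrow$~(i) being immediate. The only difference is that you spell out, via the upper-cone argument for $w\vee y'=1$, the step that an orthogonal orthocomplemented paraorthomodular poset is orthomodular, which the paper compresses into a single ``i.e.''---a worthwhile elaboration, not a deviation.
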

\begin{proof}  {\rm(i)} $\Rightarrow$ {\rm (ii)}: Since $\mathbf P$ is a Boolean  poset we obtain 
from Theorem \ref{bnocmeg5} and Proposition \ref{bmeg1} that, for all $x, y\in P$, 
  $$
	x\leq y \text{ if and only if	} x\rightarrow_C y=1.
	$$

\noindent{}{\rm(ii)} $\Rightarrow$ {\rm (iii)}: Again from Theorem \ref{bnocmeg5} and Proposition \ref{bmeg1} we obtain that 
$\mathbf P$ is an orthocomplemented paraorthomodular poset, i.e., 
	$\mathbf P$ is an orthomodular  poset.
	
	Since $\mathbf P$ is also weakly Boolean poset that has the maximality 
	property we conclude 
	from \cite[Theorem 4.2]{tkadlec} that $\mathbf P$ is a Boolean algebra. 

 \noindent{}{\rm(ii)} $\Rightarrow$ {\rm (iii)}: It is transparent.
\end{proof}			

Having defined classical implication $\rightarrow_C$, the question arises if there exists a binary connective $\odot_C$ such that they form a so-called {\em adjoint pair}, i.e.
\begin{align}
x\odot_C y\leq_1 z\text{ if and only if }x\leq_2 y\rightarrow_C z. \tag{AP}
\end{align}
We can state and prove the following result showing that condition (AP) 
 already yields the structure of a Boolean algebra.

\begin{proposition}\label{badji}
	Let $\mathbf P=(P,\leq,{}',0,1)$ be an orthogonal  orthocomplemented  lub-complete poset. 
	Then the following conditions are equivalent:
	\begin{enumerate}[{\rm(i)}]
		\item $\mathbf P$ is a Boolean algebra.
		\item There exists a binary operator $\odot_C$ such that 
		$x\odot_C y\leq_1 z$ if and only if 
		$x\leq_2 y\rightarrow_C z$ for all $x, y, z\in P$.
	\end{enumerate}
\end{proposition}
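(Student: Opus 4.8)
The plan is to prove the two implications separately, with all the real work in $(ii)\Rightarrow(i)$; the converse is a short residuation computation. For $(i)\Rightarrow(ii)$, assume $\mathbf P$ is a Boolean algebra. Then it is a lattice, so by Proposition~\ref{bprop1}(v) the classical implication is single-valued, $y\rightarrow_C z=\{y'\vee z\}$. I would define $\odot_C$ to be the (single-valued) meet, $x\odot_C y:=\{x\wedge y\}$. Since both operators then take singleton values, $x\odot_C y\leq_1 z$ reduces to $x\wedge y\leq z$ and $x\leq_2 y\rightarrow_C z$ reduces to $x\leq y'\vee z$, so (AP) becomes exactly the Boolean residuation law $x\wedge y\leq z\iff x\leq y'\vee z$, which holds in every Boolean algebra. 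This yields the required $\odot_C$.

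For $(ii)\Rightarrow(i)$ I would first unwind (AP) into a purely order-theoretic condition. Using the definition of $\leq_2$ together with mub-completeness of $\mathbf P$ (valid since for a bounded poset with antitone involution mub- and lub-completeness coincide, so every upper bound of $\{y',z\}$ dominates a minimal one), the right-hand side $x\leq_2 y\rightarrow_C z=\Min U(y',z)$ is equivalent to $x\in L\big(U(y',z)\big)=LU(y',z)$, while the left-hand side $x\odot_C y\leq_1 z$ just says $x\odot_C y\subseteq L(z)$. Hence (AP) is equivalent to
\[
x\odot_C y\subseteq L(z)\iff x\in LU(y',z)\qquad(x,y,z\in P),
\]
i.e., for fixed $x,y$, that the up-set $V_{x,y}:=\{z\mid x\in LU(y',z)\}$ equals the upper cone $U(x\odot_C y)$. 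In particular, the mere \emph{existence} of $\odot_C$ forces each $V_{x,y}$ to be an upper cone, that is $V_{x,y}=UL(V_{x,y})$.

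The goal is then to deduce the order property $x\rightarrow_C y=1\Rightarrow x\leq y$ from this closure condition. Since $\mathbf P$ is orthocomplemented, Theorem~\ref{bnocmeg5} already supplies the converse implication $x\leq y\Rightarrow x\rightarrow_C y=1$, so establishing the order property makes condition (ii) of Theorem~\ref{bmeg3} hold, and that theorem immediately gives that $\mathbf P$ is a Boolean algebra. (Equivalently, one may feed the order property into Proposition~\ref{bmeg1} to obtain a weakly Boolean paraorthomodular, hence orthomodular, poset, and then conclude via \cite[Theorem 4.2]{tkadlec}.)

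The hard part is exactly this last deduction, and it cannot be done pointwise: note that $1\odot_C y=\{y\}$, that $x=\sup(x\odot_C x)$, and that $V_{x,x}=U(x)$ are each \emph{equivalent} to the order property, so any argument that merely substitutes single elements into (AP) runs in a circle — and indeed the order property genuinely fails in non-Boolean orthomodular lattices such as $MO_2$. The derivation must therefore exploit the global consistency of the single function $\odot_C$, i.e.\ the upper-cone condition $V_{x,y}=UL(V_{x,y})$. Concretely, assuming $x\rightarrow_C y=1$ (so $U(x',y)=\{1\}$) but $x\not\leq y$, one gets $x,y\in V_{x,x}$, whence $L(V_{x,x})\subseteq L(x,y)$ and the maximal lower bounds $x\odot_C x$ are forced strictly below $x$; the aim is then to produce, using orthocomplementation and a further instance of (AP), an upper bound of $L(V_{x,x})$ that fails to lie in $V_{x,x}$, contradicting that $V_{x,x}$ is an upper cone. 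Making this collapse work in an arbitrary (possibly non-lattice) orthogonal orthocomplemented lub-complete poset — perhaps by first extracting enough lattice structure from (AP) — is the main obstacle I would expect to wrestle with.
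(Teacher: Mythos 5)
Your direction (i)$\Rightarrow$(ii) is correct and coincides with the paper's (define $\odot_C$ as meet and use Boolean residuation), and your order-theoretic unwinding of (AP) is sound: by mub-completeness, $x\leq_2 y\rightarrow_C z$ is indeed equivalent to $x\in LU(y',z)$, so the existence of $\odot_C$ says exactly that $U(x\odot_C y)=\{z\mid x\in LU(y',z)\}$. But your (ii)$\Rightarrow$(i) is not a proof: you reduce it to establishing the order property $x\rightarrow_C y=1\Rightarrow x\leq y$, sketch a hoped-for contradiction from $V_{x,x}$ being an upper cone, and then explicitly leave that step open as ``the main obstacle I would expect to wrestle with.'' Since that step is the entire content of the implication, the proposal has a genuine gap.

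Moreover, the guiding claim behind your difficulty --- that pointwise substitutions into (AP) must run in a circle --- is refuted by the paper's own proof, which never derives the order property at all. It proceeds by a handful of pointwise instantiations: taking $z:=y$, orthocomplementation gives $y\rightarrow_C y=\Min U(y',y)=\{1\}$, so $x\leq_2 y\rightarrow_C y$ trivially and hence $x\odot_C y\leq_1 y$; taking $z:=x$, every element of $\Min U(y',x)$ lies above $x$, so $x\odot_C y\leq x$ as well, i.e., $x\odot_C y\subseteq L(x,y)$. Next, for $x\leq y$ it takes $z:=x\vee(x'\wedge y)$; since $x\leq z$ forces $x'\rightarrow_C z=\Min U(x,z)=\{z\}$, (AP) together with $y\odot_C x'\leq_1 x'\wedge y$ yields $y\leq x\vee(x'\wedge y)\leq y$, which is orthomodularity. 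Then, if $x\wedge y=x\wedge y'=0$, the inclusion $x\odot_C y\subseteq L(x,y)$ gives $x\odot_C y=\{0\}=x\odot_C y'$, and (AP) with Proposition~\ref{bprop1}(iv) (i.e., $y\rightarrow_C 0\approx y'$) gives $x\leq y'$ and $x\leq y$, hence $x=0$: $\mathbf P$ is weakly Boolean. Finally \cite[Theorem 4.2]{tkadlec} (a weakly Boolean orthomodular poset with the property of maximality --- supplied here by lub-completeness --- is a Boolean algebra) finishes the proof. This is precisely the external ingredient you mention parenthetically, but you tie it to first proving the order property, which is the one thing that (as you correctly sense, e.g., via $MO_2$-type obstructions to naive manipulations) resists direct pointwise attack. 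The repair for your argument is therefore not to force the order property out of the upper-cone condition on $V_{x,y}$, but to aim the pointwise instances of (AP) at orthomodularity and weak Booleanness directly and let Tkadlec's theorem convert these into Booleanness.
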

\begin{proof} (i) implies (ii):  It is well known that  
	$\odot_I=\wedge$ and $x\rightarrow_C y=x'\vee y$ realize the required adjunction.

(ii) implies (i):  Let $x, y\in P$. Then $x \odot_C y\leq x, y$. Namely, 
$x\leq 1=y'\vee y$ implies $x \odot_C y\leq y$ and $u\in \Min U(y',x)$ implies 
$x\leq u$, i.e., $x\odot_C y\leq x$.

Now, let $x, y\in P, x\leq y$. Then $x\vee (x'\wedge y)\leq y$ and 
$y \odot_C x'\leq_1 y\wedge x'$. Hence $y\leq x\vee (x'\wedge y)\leq y$, i.e., 
$\mathbf P$ is an orthomodular  poset. 

Let us show that $\mathbf P$ is weakly Boolean. Suppose that 
$x, y\in P$, $x\wedge y=0$ and $x\wedge y'=0$. Then 
$x \odot_C y\leq_1 0$ and $x \odot_C y'\leq_1 0$. From (AP) and 
Proposition \ref{bprop1} (iv), we obtain that $x\leq y'$ and $x\leq y$, i.e., 
$x=0$. We conclude  by \cite[Theorem 4.2]{tkadlec} that $\mathbf P$ is a Boolean algebra.

\end{proof}
A direct consequence of Propositions \ref{bmeg1}, \ref{bmeg3}, and \ref{badji} is:

\begin{theorem}\label{bmeg4}
		Let $\mathbf P=(P,\leq,{}',0,1)$ be an orthogonal 
	lub-complete poset.  Then the following conditions are equivalent:
	\begin{enumerate}[{\rm(i)}]
		\item $\mathbf P$ is a Boolean algebra.
		\item $\mathbf P$ satisfies 
		$$
		x\leq y \text{ if and only if	} x\rightarrow_C y=1
		$$
		for all $x, y\in P$.
		\item $\mathbf P$ is an orthocomplemented  poset such that 
		 there exists a binary operator $\odot_C$ such that 
		$x\odot_C y\leq_1 z$ if and only if 
		$x\leq_2 y\rightarrow_C z$ for all $x, y, z\in P$.
	\end{enumerate}
\end{theorem}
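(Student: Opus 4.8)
The plan is to assemble the equivalence from the three results already established, checking only that their hypotheses align with the standing assumption that $\mathbf P$ is an orthogonal lub-complete poset. No new computation is needed; the work is entirely one of bookkeeping among Propositions \ref{bmeg1}, \ref{bmeg3}, and \ref{badji}.

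First I would dispatch the equivalence (i) $\Leftrightarrow$ (ii). Theorem \ref{bmeg3} already states, for an orthogonal lub-complete poset, that ``$\mathbf P$ is a Boolean poset'', the order property $x \leq y \Leftrightarrow x \rightarrow_C y = 1$, and ``$\mathbf P$ is a Boolean algebra'' are all equivalent. In particular its equivalence of being a Boolean algebra with that order property is precisely (i) $\Leftrightarrow$ (ii) here, so this half is immediate. Proposition \ref{bmeg1} enters only through the proof of Theorem \ref{bmeg3} (it supplies the implications $\mathrm{(i)} \Rightarrow \mathrm{(ii)} \Rightarrow \mathrm{(iii)}$ that Theorem \ref{bmeg3} completes into a cycle), which is why it is listed among the cited results.

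Next I would handle (i) $\Leftrightarrow$ (iii) by invoking Proposition \ref{badji}. The one point that must be observed is that Proposition \ref{badji} carries the extra hypothesis that $\mathbf P$ is orthocomplemented, and this is exactly why condition (iii) builds orthocomplementedness into its statement. For (i) $\Rightarrow$ (iii): if $\mathbf P$ is a Boolean algebra then $x \vee x' = 1$ holds for every $x$, so $\mathbf P$ is orthocomplemented; Proposition \ref{badji} then supplies the binary operator $\odot_C$ realizing (AP), which gives (iii). For (iii) $\Rightarrow$ (i): condition (iii) hands us precisely the hypotheses of Proposition \ref{badji}, namely an orthogonal, orthocomplemented, lub-complete poset admitting the adjoint $\odot_C$, whose conclusion is that $\mathbf P$ is a Boolean algebra, that is, (i).

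I do not anticipate a genuine obstacle, since each arrow is a direct citation. The only place to be careful is the matching of hypotheses: one must verify that the ambient assumptions of Theorem \ref{bmeg3} and Proposition \ref{badji}, namely orthogonality, lub-completeness, and, for \ref{badji}, orthocomplementedness, are all met before each result is applied, and in particular that a Boolean algebra is automatically orthocomplemented so that Proposition \ref{badji} is available in the direction (i) $\Rightarrow$ (iii). With these checks in place the three conditions close into a single equivalence.
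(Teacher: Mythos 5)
Your proposal is correct and matches the paper exactly: the paper gives no separate argument, stating Theorem~\ref{bmeg4} as a direct consequence of Propositions~\ref{bmeg1}, \ref{bmeg3}, and \ref{badji}, which is precisely the bookkeeping you carry out. Your explicit check that a Boolean algebra is orthocomplemented (so that Proposition~\ref{badji} applies in the direction (i)~$\Rightarrow$~(iii)) is the one hypothesis-matching point worth making, and you make it.
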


\section{Kalmbach implication in posets} \label{Kalman}

This section aims to study two multiple-valued  quantum implications - Kalmbach implication and non-tolens implication - in posets.

Let $(P,\leq,{}',0,1)$ be an orthogonal lub-complete poset. 
If $y\in P$ and $A\subseteq P$, then we will denote the set 
$\{y\vee a\mid a\in A\}$ by $y\vee A$ provided  all the elements $y\vee a$ exist. Analogously, we proceed in similar cases. Now we introduce the operator 

$\rightarrow_K\colon P^2\rightarrow2^P$ as follows:
\begin{align*}x\rightarrow_K y:=\Max L(x',y)\vee\,\,\Max L(x',y')\vee\,\, (x\wedge \Min U(x',y)). \tag{IK}
\end{align*}

$\Max L(x',y)\vee \Max L(x',y')$ is correctly defined since $\Max L(x',y')\perp \Max L(x',y)$.  Similarly, $x'\leq  \Min U(x',y)$ implies that $(x\wedge \Min U(x',y)$ 
is  correctly defined. From $\Max L(x',y)\vee \Max L(x',y')\leq x'$ and 
$x\wedge \Min U(x',y)\leq x$, we conclude that $x\rightarrow_K y$ is correctly defined. 

Similarly, as in lattices, we introduce the operator 
$\rightarrow_N\colon P^2\rightarrow2^P$ as follows:

\begin{align*}
	 x\rightarrow_N y:=y'\rightarrow_K x'. \tag{IN}
\end{align*}

We say that $\rightarrow_K$ is a {\em Kalmbach implication} 
and $\rightarrow_N$ is a {\em non-tolens implication}.

We first list several elementary properties of the operator $\rightarrow_K$  in orthogonal  lub-complete posets.

\begin{proposition}\label{prop1}
	Let $(P,\leq,{}',0,1)$ be an orthogonal lub-complete poset  
	and $x,y, z\in P$. Then the following hold:
	\begin{enumerate}[{\rm(i)}]
		\item \begin{tabular}{@{}l}\\
			$x\rightarrow_K y\approx\left\{
		\begin{array}{@{}ll}
			x\vee y'  \vee \Max L(x',y)         & \text{ if }x\leq y, \\
			y\vee  \Max L(x',y') & \text{ if }x\perp y, \\
			x'\vee  (x\wedge \Min U(x',y))          & \text{ if }y\leq x,
		\end{array}
		\right.$\\
		\\
	\end{tabular}
		\item $x\rightarrow_K 1\approx x\vee x'$, $1\rightarrow_K x\approx x$,
		\item $x\rightarrow_K 0\approx x'$, $0\rightarrow_K x\approx x\vee x'$.
	\end{enumerate}
\end{proposition}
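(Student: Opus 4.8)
The plan is to prove the three items of Proposition~\ref{prop1} by direct computation from the definition (IK), using Proposition~\ref{bprop1} and the correctness remarks immediately preceding the statement. The key observation is that the definition
\[
x\rightarrow_K y=\Max L(x',y)\vee\Max L(x',y')\vee(x\wedge\Min U(x',y))
\]
simplifies dramatically once we fix the relationship between $x$ and $y$, because in each special case one of the three cones collapses to a single obvious value.

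For item~(i), I would treat the three cases separately. If $x\leq y$, then $y'\leq x'$, so $y'\in L(x',y')$ and in fact $y'$ is the largest element there, giving $\Max L(x',y')=\{y'\}$; moreover $x\leq y$ forces $x',y\geq x'$ only trivially, but the point is that $\Min U(x',y)$ reduces because $y\geq x$ means $U(x',y)\subseteq U(x',x)$, and using orthogonality one checks $x\wedge\Min U(x',y)=x$. Assembling the three joined pieces yields $x\vee y'\vee\Max L(x',y)$ as claimed. If $x\perp y$, i.e.\ $y\leq x'$, then $y\in L(x',y)$ is the top of that cone, so $\Max L(x',y)=\{y\}$, while the third term $x\wedge\Min U(x',y)$ vanishes to $0$ since $\Min U(x',y)=\{x'\}$ when $y\leq x'$ and $x\wedge x'=0$; this leaves $y\vee\Max L(x',y')$. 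If $y\leq x$, then $y'\geq x'$, so $\Max L(x',y')=\{x'\}$ and $\Max L(x',y)$ is absorbed once we note $x'\leq x'\vee(x\wedge\Min U(x',y))$, so the expression collapses to $x'\vee(x\wedge\Min U(x',y))$. The main obstacle here is keeping the join-of-sets notation honest: because these are \emph{sets} of maximal values rather than single elements, I must verify at each step that the indicated suprema exist (which the preliminary remarks guarantee via orthogonality) and that absorbed terms genuinely lie below the retained ones, so that adding them back changes nothing after taking $\Max$.

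Items~(ii) and~(iii) are then immediate specializations, but I would derive them cleanly from~(i) rather than from scratch. For $x\rightarrow_K 1$: here $x\leq 1$, so by the first case we get $x\vee 1'\vee\Max L(x',1)=x\vee 0\vee\{x'\}=x\vee x'$, using that $L(x',1)=L(x')$ has top $x'$. For $1\rightarrow_K x$: we have $x\leq 1$ in the sense $y\leq x$ with the roles swapped, namely $x\leq 1$ gives $x\leq x'' $-type reductions; concretely $1'=0$, so $L(1',x)=\{0\}$ and $L(1',x')=\{0\}$, while $\Min U(1',x)=\Min U(0,x)=\{x\}$, giving $1\rightarrow_K x=0\vee 0\vee(1\wedge x)=x$. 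For $x\rightarrow_K 0$: since $0\leq x$ this is the third case, yielding $x'\vee(x\wedge\Min U(x',0))=x'\vee(x\wedge\{x'\})=x'\vee 0=x'$. For $0\rightarrow_K x$: since $0\perp x$ this is the second case, giving $x\vee\Max L(0',x')=x\vee\Max L(1,x')=x\vee\{x'\}=x\vee x'$.

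I expect the genuine difficulty to be concentrated entirely in the first case of~(i), the hypothesis $x\leq y$, where none of the three constituent cones trivializes completely and I must argue that $x\wedge\Min U(x',y)=x$. This requires showing $x\leq u$ for every $u\in\Min U(x',y)$; since $u\geq x'$ and $u\geq y\geq x$, we get $u\geq x$ directly, so $x\wedge\Min U(x',y)$ is the singleton-wise meet yielding $x$. Once this reduction is in place, the remaining cases and items follow by the routine cone computations sketched above, and I would present them compactly, citing Proposition~\ref{bprop1} for the boundary identities $x'\vee 1=1$, $0\vee x=x$, and so on.
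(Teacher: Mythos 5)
Your overall strategy---case analysis read off directly from (IK), collapsing each cone in each case, then specializing to get (ii) and (iii)---is exactly the paper's, and your treatment of the cases $x\leq y$ and $y\leq x$ is sound (in the third case your absorption of $\Max L(x',y)$ via $z_1\leq x'$ for every $z_1\in\Max L(x',y)$ is correct, and in fact cleaner than the paper's terse ``$\{0\}$''). But there is a genuine error in the case $x\perp y$: you discard the third term of (IK) on the grounds that $\Min U(x',y)=\{x'\}$ and ``$x\wedge x'=0$''. The identity $x\wedge x'=0$ is simply not available here: an orthogonal lub-complete poset carries only an antitone involution, not an orthocomplementation. The proposition you are proving signals this itself---item (ii) asserts $x\rightarrow_K 1\approx x\vee x'$ rather than $1$, and dually $x\wedge x'=(x\vee x')'$ need not be $0$. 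Concretely, in the lattice of Fig.~1 one has $a\wedge a'=a\neq 0$; taking $x=y=a$ there (note $a\perp a$, since $a\leq a'$), the third term of (IK) equals $a\wedge a'=a$, not $0$. You use the same false identity a second time when computing $x\rightarrow_K 0$, writing $x'\vee(x\wedge\{x'\})=x'\vee 0$; there the error is harmless, since $x\wedge x'\leq x'$ absorbs the term directly, but the justification should be that absorption, not vanishing.

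In the $x\perp y$ case the claimed formula $y\vee\Max L(x',y')$ is still true, but it needs an argument you have not supplied: one must show $x\wedge x'\leq z_2$ for every $z_2\in\Max L(x',y')$, so that $y\vee z_2\vee(x\wedge x')=y\vee z_2$. This follows from maximality: since $x\leq y'$ we have $x\wedge x'\leq x\leq y'$ and $x\wedge x'\leq x'$, so $x\wedge x'\in L(x',y')$; the join $z_2\vee(x\wedge x')$ exists because $z_2\leq x'\leq x\vee x'=(x\wedge x')'$, i.e.\ $z_2\perp(x\wedge x')$, and this join again lies in $L(x',y')$, whence maximality of $z_2$ forces $z_2\vee(x\wedge x')=z_2$. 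With this repair (and the analogous trivial absorption in $x\rightarrow_K 0$) your proof is correct and coincides with the paper's, which performs the same absorptions silently.
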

\begin{proof}
	\begin{enumerate}[(i):]
		\item Assume $x\leq y$.  Then $y'\leq x'$ and hence $\Max L(x',y')=\{y'\}$, 
		$x\wedge \Min U(x',y)=\{x\}$. We conclude 
		 $x\rightarrow_K y=x\vee y'\vee \Max L(x',y)$.

		Assume now $x\leq y'$. Hence $y\leq x'$ and 
		$x\rightarrow_K y=y\vee  \Max L(x',y') \vee x\wedge\{x'\}=y\vee  \Max L(x',y')$. 
		
		Let $y\leq x$. Then $x\rightarrow_K y=x'\vee  \{0\} \vee (x\wedge \Min U(x',y))$.
		\item From (i) we have $x\rightarrow 1=x\vee  0 \vee \{x'\}=x\vee x'$ and 
		 $1\rightarrow x= 0\vee(1\wedge \{x\})=\{x\}$. 
		 
		 \item Again from (i) we compute $x\rightarrow 0=x'\vee  (x\wedge \{x'\})=\{x'\}$ and 
		 $0\rightarrow x= 0\vee x'\vee (\{1\wedge x\})=x\vee x'$. 
			\end{enumerate}
\end{proof}

We immediately obtain the following. 

\begin{corollary}\label{cor1N}
		Let $(P,\leq,{}',0,1)$ be an orthogonal lub-complete poset  
	and $x,y, z\in P$. Then the following hold:
	\begin{enumerate}[{\rm(i)}]
		\item \begin{tabular}{@{}l}\\
			$x\rightarrow_N y\approx\left\{
		\begin{array}{@{}ll}
			y\vee  (y'\wedge \Min U(x',y))         & \text{ if }x\leq y, \\
			x'\vee  \Max L(x,y) & \text{ if }x'\perp y', \\
			x\vee y'  \vee \Max L(x',y)          & \text{ if }y\leq x,
		\end{array}
		\right.$\\
		\\
		\end{tabular}
		
		\item $x\rightarrow_N 1\approx x\vee x'$, $1\rightarrow_N x\approx x$,
		\item $x\rightarrow_N 0\approx x'$, $0\rightarrow_N x\approx x\vee x'$.
	\end{enumerate}
\end{corollary}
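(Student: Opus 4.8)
The plan is to derive everything directly from the defining identity (IN), namely $x\rightarrow_N y = y'\rightarrow_K x'$, by specializing Proposition~\ref{prop1} to the pair $(y',x')$ and then translating all conditions and expressions back through the antitone involution. Since $\rightarrow_N$ is defined purely in terms of $\rightarrow_K$, no fresh computation with the generator (IK) is needed; the whole content is a change of variables $x\mapsto y'$, $y\mapsto x'$ followed by simplification using $x''\approx x$ and the symmetry of $\vee$, $\Max L(\cdot,\cdot)$ and $\Min U(\cdot,\cdot)$ in their two arguments.

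For part (i), I would first observe that the three regions of Proposition~\ref{prop1}(i), stated there for $a\rightarrow_K b$ as $a\le b$, $a\perp b$ and $b\le a$, correspond under $a=y'$, $b=x'$ to $x\le y$ (since $y'\le x'\iff x\le y$), to $x'\perp y'$ (since orthogonality is symmetric, $y'\perp x'\iff x'\perp y'$), and to $y\le x$ (since $x'\le y'\iff y\le x$). In each region I would substitute into the corresponding formula of Proposition~\ref{prop1}(i): e.g. in the orthogonal case $b\vee\Max L(a',b')$ becomes $x'\vee\Max L(y,x)=x'\vee\Max L(x,y)$, and in the two comparable cases $a\vee b'\vee\Max L(a',b)$ and $a'\vee(a\wedge\Min U(a',b))$ become, after replacing $a'=y$ and $b'=x$, the stated joins; here I must use $x''\approx x$, $y''\approx y$ throughout and the argument-symmetry of $\Max L$ and $\Min U$ to bring each expression into the printed shape.

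For parts (ii) and (iii) I would simply feed the boundary values through (IN). Thus $x\rightarrow_N 1 = 1'\rightarrow_K x' = 0\rightarrow_K x'$, which by Proposition~\ref{prop1}(iii) equals $x'\vee x''=x\vee x'$; similarly $1\rightarrow_N x = x'\rightarrow_K 0 = (x')'=x$ by Proposition~\ref{prop1}(iii), while $x\rightarrow_N 0 = 0'\rightarrow_K x' = 1\rightarrow_K x' = x'$ by Proposition~\ref{prop1}(ii), and $0\rightarrow_N x = x'\rightarrow_K 1 = x'\vee x''=x\vee x'$ by Proposition~\ref{prop1}(ii). These four identities are immediate once the complements $1'=0$, $0'=1$ and $x''=x$ are tracked.

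The main obstacle is bookkeeping rather than mathematics. I must make sure that, after the substitution $(x,y)\mapsto(y',x')$, every set-valued join such as $y\vee A$ or $x'\vee(x\wedge\Min U(\cdot,\cdot))$ is still legitimately defined; but this is guaranteed by exactly the well-definedness argument given immediately after (IK), since the relevant orthogonality and domination relations (of the form $\Max L(\cdot,\cdot)\perp\Max L(\cdot,\cdot)$ and $a'\le\Min U(a',b)$) are preserved under the involution, so no separate verification is required. The only genuine care needed is to keep the antitone involution straight so that each of the three regions is paired with the correct one of the three formulas, and to recognise the self-dual orthogonal region as the common middle case.
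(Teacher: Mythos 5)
Your strategy---specialize Proposition~\ref{prop1} at the pair $(y',x')$ and translate back through the involution---is exactly the paper's intended argument (the paper offers no separate proof, introducing the corollary with ``We immediately obtain the following''), and your region correspondence as well as all four boundary computations in (ii)--(iii) are correct. But the single step you assert without checking is the one that fails. Carrying the substitution out in the comparable cases: the region $x\leq y$, i.e.\ $a\leq b$ for $a=y'$, $b=x'$, receives the formula
\begin{align*}
a\vee b'\vee \Max L(a',b)\;=\;y'\vee x\vee \Max L(y,x')\;=\;x\vee y'\vee \Max L(x',y),
\end{align*}
while the region $y\leq x$, i.e.\ $b\leq a$, receives
\begin{align*}
a'\vee\bigl(a\wedge \Min U(a',b)\bigr)\;=\;y\vee\bigl(y'\wedge \Min U(y,x')\bigr)\;=\;y\vee\bigl(y'\wedge \Min U(x',y)\bigr).
\end{align*}
This is the \emph{transpose} of the pairing printed in the corollary, so the substitution does not ``become the stated joins.'' You explicitly flagged pairing the right region with the right formula as the only point needing care, and then claimed it without performing the check.

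No bookkeeping can rescue the printed pairing, because Corollary~\ref{cor1N}(i) as printed is itself mis-stated: its first and third formulas are interchanged (a typo, which your computation, done honestly, exposes). This is visible within the corollary: part (ii) gives $1\rightarrow_N x\approx x$, whereas the printed $y\leq x$ line with $x\mapsto 1$, $y\mapsto x$ yields $1\vee x'\vee \Max L(0,x)=\{1\}$; part (ii) also gives $x\rightarrow_N 1\approx x\vee x'$, whereas the printed $x\leq y$ line at $y=1$ yields $\{1\}$, which fails in any non-orthocomplemented example such as Fig.~1 (there $a\vee a'=a'\neq 1$); and part (iii) gives $1\rightarrow_N 0=\{0\}$ while the printed $y\leq x$ line evaluates to $\{1\}$, already false in the two-element Boolean algebra where all the arrows coincide with $x'\vee y$. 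So the correct conclusion of your method is the corollary with the first and third lines of (i) swapped; your middle case $x'\perp y'\mapsto x'\vee\Max L(x,y)$, the well-definedness remark, and parts (ii)--(iii) are sound, but the proof as written ``derives'' a false pairing and is therefore wrong at precisely the substitution step.
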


We have exact parallels to the properties of classical implication, 
but often the proofs
are trickier.

\begin{theorem}\label{knocmeg5}
		Let $\mathbf P=(P,\leq,{}',0,1)$ be an orthogonal 
	lub-complete poset. Then the following conditions are equivalent:
	\begin{enumerate}[{\rm(i)}]
		\item $\mathbf P$ is an orthocomplemented  poset.
		\item For all $x, y\in P$
			$$
		x\leq y \text{ implies	} x\rightarrow_K y=1.
		$$
	
		\item For all $x, y\in P$
		$$
		x\leq y \text{ implies	} x\rightarrow_N y=1.
		$$
	\end{enumerate}
\end{theorem}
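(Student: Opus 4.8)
The plan is to establish the equivalence through the cycle $\mathrm{(i)}\Rightarrow\mathrm{(ii)}\Rightarrow\mathrm{(i)}$ and then to obtain $\mathrm{(iii)}$ almost for free from the defining relation $x\rightarrow_N y=y'\rightarrow_K x'$ of (IN). Since $x\leq y$ is equivalent to $y'\leq x'$, statement $\mathrm{(ii)}$ applied to the pair $(y',x')$ yields $y'\rightarrow_K x'=1$, i.e. $x\rightarrow_N y=1$, so $\mathrm{(ii)}\Rightarrow\mathrm{(iii)}$; conversely, using the companion relation $y'\rightarrow_N x'=x\rightarrow_K y$ (again a direct consequence of (IN)), statement $\mathrm{(iii)}$ returns $\mathrm{(ii)}$. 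Thus $\mathrm{(ii)}\Leftrightarrow\mathrm{(iii)}$ is purely formal, and the whole content sits in $\mathrm{(i)}\Leftrightarrow\mathrm{(ii)}$.

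For $\mathrm{(ii)}\Rightarrow\mathrm{(i)}$ I would argue cheaply: for an arbitrary $x\in P$ we have $0\leq x$, so $\mathrm{(ii)}$ forces $0\rightarrow_K x=1$; but Proposition~\ref{prop1}(iii) already gives $0\rightarrow_K x=x\vee x'$, whence $x\vee x'=1$. As $x$ is arbitrary, $\mathbf P$ is orthocomplemented.

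The substantial direction is $\mathrm{(i)}\Rightarrow\mathrm{(ii)}$. Assume $\mathbf P$ is orthocomplemented, so $a\vee a'=1$, and hence $a\wedge a'=(a\vee a')'=0$, for every $a$. Fix $x\leq y$. By Proposition~\ref{prop1}(i) the value is $x\rightarrow_K y=\{(x\vee y')\vee m\mid m\in\Max L(x',y)\}$, a nonempty set because $L(x',y)\ni 0$ and $\mathbf P$ enjoys the property of maximality. It therefore suffices to show $s:=(x\vee y')\vee m=1$ for each such $m$, equivalently $s'=0$. Here I would exploit the antitone involution: from $s\geq x,y',m$ we get $s'\leq x',y,m'$, so $s'\in L(x',y)$ and $s'\perp m$. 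Consequently $s'\vee m$ exists and still lies in $L(x',y)$ (as the least upper bound of two elements both below $x'$ and below $y$), and it dominates $m$. Maximality of $m$ then forces $s'\vee m=m$, i.e. $s'\leq m$; combined with $s'\leq m'$ this gives $s'\leq m\wedge m'=0$. Hence $s=1$, so $x\rightarrow_K y=\{1\}=1$.

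The delicate point, and the step I expect to be the main obstacle, is precisely this $\mathrm{(i)}\Rightarrow\mathrm{(ii)}$ computation: upgrading ``$s'$ is a common lower bound of $x',y$ lying below $m'$'' to ``$s'=0$''. The trick is to form the orthogonal join $s'\vee m$ and invoke the maximality of $m$ in $L(x',y)$, rather than attempting to manipulate a meet $x'\wedge y$ that need not exist in a genuine poset; orthocomplementation enters exactly through $m\wedge m'=0$. Everything else is bookkeeping with Proposition~\ref{prop1}, Corollary~\ref{cor1N}, and the duality encoded in (IN).
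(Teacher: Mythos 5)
Your proof is correct, and its skeleton matches the paper's, but it diverges in two places worth noting. First, the paper proves the cycle (i) $\Rightarrow$ (ii) $\Rightarrow$ (iii) $\Rightarrow$ (i), closing it by evaluating $1 = x\rightarrow_N x = x\vee(x'\wedge \Min U(x',x)) = x\vee x'$ via Corollary~\ref{cor1N}; you instead close via (ii) $\Rightarrow$ (i) using $0\rightarrow_K x \approx x\vee x'$ from Proposition~\ref{prop1}(iii), and you get (ii) $\Leftrightarrow$ (iii) purely formally from (IN) in both directions, using $x\rightarrow_N y = y'\rightarrow_K x'$ and its companion $y'\rightarrow_N x' = x\rightarrow_K y$ --- an equally valid and arguably cleaner decomposition, since it isolates all the content in the pair (i) $\Leftrightarrow$ (ii). Second, and more substantively, for (i) $\Rightarrow$ (ii) the paper simply writes $x\rightarrow_K y = x\vee y'\vee \Max L(x',y) = (x\vee y')\vee(x'\wedge y) = 1$, notating the maximal lower bounds of $\{x',y\}$ as if they formed a genuine meet and leaving the final join-to-$1$ step unjustified in the non-lattice setting; your argument --- passing to the complement $s'$ of a candidate value $s=(x\vee y')\vee m$, observing $s'\in L(x',y)$ and $s'\perp m$, forming the orthogonal join $s'\vee m\in L(x',y)$, and invoking maximality of $m$ to conclude $s'\leq m\wedge m' = (m\vee m')' = 0$ --- supplies exactly the poset-theoretic detail that the paper's one-line computation glosses over, and it is the same maximality trick the paper itself deploys in harder proofs such as Theorem~\ref{meg1}. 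In short, your version buys rigor where the paper is telegraphic, at no cost in generality.
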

\begin{proof} (i) $\Rightarrow$ (ii): Let $x, y\in P$. Assume that 
	$x\leq y$. Then 
 \begin{align*}
  x\rightarrow_K y= x\vee y'\vee \Max L(x',y) = %
  (x\vee y')\vee (x'\wedge y)=1.  
 \end{align*}
 (ii) $\Rightarrow$ (iii): Let $x, y\in P$. Suppose that 
	$x\leq y$. Then $y'\leq x'$ and 
 \begin{align*}
     x\rightarrow_N y=y'\rightarrow_K x'= 1. 
 \end{align*}
   (iii) $\Rightarrow$ (i):   Let $x\in P$. Then $x\leq x$ and 
   \begin{align*}
       1= x\rightarrow_N x=x\vee  (x'\wedge \Min U(x',x))   =x\vee x'.
   \end{align*}
   Hence also $x\wedge x'=0$. 
\end{proof}

\begin{theorem}\label{meg1} 
		Let $\mathbf P=(P,\leq,{}',0,1)$ be an orthogonal 
	lub-complete poset. Then the following conditions are equivalent:
	\begin{enumerate}[{\rm(i)}]
		\item $\mathbf P$ is a paraorthomodular  poset.
		\item For all $x, y\in P$
			$$
		x\rightarrow_K y=1 \text{ implies	} x\leq y.
		$$
	
		\item For all $x, y\in P$
		$$
		x\rightarrow_N y=1 \text{ implies	} x\leq y.
		$$
	\end{enumerate}
\end{theorem}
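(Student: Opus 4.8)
The plan is to prove the cycle $\mathrm{(i)}\Rightarrow\mathrm{(ii)}\Rightarrow\mathrm{(i)}$ and to obtain the equivalence of (ii) and (iii) for free from the defining duality $x\rightarrow_N y = y'\rightarrow_K x'$ of (IN). Indeed, assuming (ii), if $x\rightarrow_N y=1$ then $y'\rightarrow_K x'=1$, so (ii) applied to the pair $(y',x')$ gives $y'\le x'$, i.e.\ $x\le y$; conversely, rewriting $x\rightarrow_K y=y'\rightarrow_N x'$ shows (iii)$\Rightarrow$(ii) in the same way. Since paraorthomodularity is self-dual under ${}'$ (conditions (P) and (P') are interchanged by complementation), this symmetry is exactly what one expects, and it reduces the whole theorem to the single equivalence (i)$\Leftrightarrow$(ii).

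For (i)$\Rightarrow$(ii), assume $\mathbf P$ is paraorthomodular and $x\rightarrow_K y=1$. Fix $a\in\Max L(x',y)$, $b\in\Max L(x',y')$ and $c\in x\wedge\Min U(x',y)$; by (IK) every such $a\vee b\vee c$ equals $1$. A cheap first observation: since $a\vee b\le x'$ and $c\le x$, every element of $x\rightarrow_K y$ lies below $x\vee x'$, so $x\rightarrow_K y=1$ already forces $x\vee x'=1$ (and dually $x\wedge x'=0$), i.e.\ orthocomplementation at $x$. Now I feed the relation $a\vee b\vee c=1$ into the paraorthomodular law twice. From $x'\vee c\ge a\vee b\vee c=1$ and $c\le x$, condition (P') applied to $c\le x$ yields $c=x$; consequently each $u\in\Min U(x',y)$ satisfies $x\le u$, whence $u\ge x\vee x'=1$ and $\Min U(x',y)=\{1\}$, that is $x'\vee y=1$. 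Feeding back $c=x$, the identity $(a\vee b)\vee x=1$ together with $a\vee b\le x'$ and (P') gives $a\vee b=x'$. So $x'$ splits as $x'=a\vee b$ with $a\le y$, $b\le y'$ and $a\perp b$.

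The remaining task, turning these meet/join identities into the genuine order relation $x\le y$, is the crux, and it is here that the non-lattice setting makes the argument delicate. The linchpin is the computation $b\vee y=(a\vee b)\vee y=x'\vee y=1$, which uses $a\le y$ to absorb $a$. Next, since $b\le x'\wedge a'=(x\vee a)'$ and $(x\vee a)\vee b=x\vee x'=1$, a further application of (P') gives $b=(x\vee a)'$, i.e.\ $x\vee a=b'$; because $b\le y'$ we have $y\le b'=x\vee a$. Finally I apply (P') one last time to the pair $y\le x\vee a$: its hypothesis $y\vee(x\vee a)'=y\vee b=1$ is exactly the linchpin identity, so $y=x\vee a$, giving $x\le x\vee a=y$ as desired. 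I expect the main obstacle to be precisely the choreography of these four uses of (P'), together with checking that each required join exists — each is an orthogonal join, hence available in an orthogonal poset — and that each hypothesis ``$p\le q$ with $p\vee q'=1$'' is genuinely met.

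For (ii)$\Rightarrow$(i) I argue by contraposition. If $\mathbf P$ is not paraorthomodular, (P') fails, so there are $p\le q$ with $p\ne q$ and $p\vee q'=1$. The key free gift is that $q\vee q'\ge p\vee q'=1$, so $\mathbf P$ is automatically orthocomplemented at $q$. Evaluating $q\rightarrow_K p$ through the case $p\le q$ of Proposition~\ref{prop1}(i) gives $q\rightarrow_K p=q'\vee\bigl(q\wedge\Min U(q',p)\bigr)$; since $q'\vee p=1$ forces $\Min U(q',p)=\{1\}$, this collapses to $q\rightarrow_K p=q'\vee q=1$, while $q\not\le p$. This pair witnesses the failure of (ii), and combined with the duality argument of the first paragraph it closes all the implications.
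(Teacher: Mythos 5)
Your proof is correct, and it is complete: every join you invoke is an orthogonal join (including $x\vee x'$, which exists since $x\perp x'$ always holds), each application of (P\/') has its hypothesis verified, and the passage from $\Min U(x',y)=\{1\}$ to $x'\vee y=1$ is legitimate because a bounded lub-complete poset with antitone involution is mub-complete. Structurally you differ from the paper only mildly: the paper runs the cycle (i)$\Rightarrow$(ii)$\Rightarrow$(iii)$\Rightarrow$(i), where (ii)$\Rightarrow$(iii) is the same duality $x\rightarrow_N y=y'\rightarrow_K x'$ you use, and its (iii)$\Rightarrow$(i) is exactly the direct form of your contrapositive argument for (ii)$\Rightarrow$(i) (the paper takes $x\le y$ with $x\vee y'=1$ and evaluates $y\rightarrow_K x=1$ via Corollary~\ref{cor1N}, just as you evaluate $q\rightarrow_K p=1$ via Proposition~\ref{prop1}(i)). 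The genuine divergence is the endgame of (i)$\Rightarrow$(ii). Both proofs open identically: (P\/') forces $c=x$, hence $x\vee x'=1$ and $x'\vee y=1$, and then $a\vee b=x'$. But the paper then exploits the arbitrariness of the maximal elements to prove $z_1=x'\wedge z_2'$ and $z_2=x'\wedge z_1'$, identifies $z_1=x'\wedge y$ and $z_2=x'\wedge y'$, and finishes with manipulations around $(x'\wedge y)\vee x=x\vee y$ and $x'\wedge y'=y'$ --- a passage whose printed form contains typos and whose existence claims (e.g.\ for $x\vee y$) need extra care. You sidestep all of this: keeping $a,b$ arbitrary, you use $a\le y$ to absorb $a$ and get $b\vee y=1$, then two further (P\/') applications yield $b=(x\vee a)'$ and $y=x\vee a$, whence $x\le y$. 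Your route is shorter, never needs the meets $x'\wedge y$, $x'\wedge y'$ to be identified (nor $\Max L(x',y)$ to be a singleton), and avoids the paper's murkiest step; the paper's version, in exchange, extracts the sharper structural fact $x'=(x'\wedge y)\vee(x'\wedge y')$ with uniquely determined components, which your argument does not need and does not provide.
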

\begin{proof} (i) $\Rightarrow$ (ii): Let $x, y\in P$. Assume that 
	$x\rightarrow_K y=1$. Then for all $z_1, z_2, z_3\in P$ such that 
	$z_1\in \Max L(x',y), z_2\in\Max L(x',y'), z_3\in \Min U(x',y)$ we have 
	$z_1\vee z_2\vee (x\wedge z_3)=1$. Since $z_1\perp z_2$ we obtain that 
         $z_1\vee z_2$ exists and $z_1\vee z_2\leq x'$. We conclude that 
         $(z_1\vee z_2)\vee x=1$. From (P') we have $z_1\vee z_2= x'$.

         Similarly, from $x\wedge z_3\leq x$ and $x'\vee (x\wedge z_3)=1$ we get from 
         (P') that $x\wedge z_3= x$, i.e., $x\leq z_3$. Therefore $x\vee x'=1$ and $x\wedge x'=0$.

         Since $z_2\perp x$ we obtain that 
         $z_2\vee x$ exists and $z_1\leq x'\wedge z_2'$. From $z_1\vee (z_2\vee z)=1$ and (P') we conclude 
         $z_1=x'\wedge z_2'$. Similarly, $z_2=x'\wedge z_1'$. But $z_1\in \Max L(x',y)$ 
         and $z_2\in\Max L(x',y')$ were chosen arbitrarily. Hence $z_1=x'\wedge y$ and 
         $z_2=x'\wedge y'$. We then have $1=(x'\wedge y')\vee (x'\wedge y)\vee x$. 

         Since $(x'\wedge y)\vee x\leq x\vee y$ and 
         $0=(x\vee y)\vee (x\vee y')\wedge x'$ we obtain from (P) that 
         $(x'\wedge y)\vee x= x\vee y$.  From the latter equality, 
         $x'\wedge y'\leq y'$ and (P') we conclude that  $x'\wedge y'= y'$. 
         Hence  $ y'\leq x'$, i.e., $x\leq y$.
 
 \noindent{}(ii) $\Rightarrow$ (iii): Let $x, y\in P$. Suppose that 
	$x\rightarrow_N y=1$. Then $y'\rightarrow_K x'=1$ and we obtain 
 $y'\leq x'$. Hence  $x\leq y$.

\noindent{}(iii) $\Rightarrow$ (i):   Let  $x\leq y$ and 
$x\vee y'=1$. Then  $x\perp y'$ and from 
Corollary \ref{cor1N} we conclude $1=x\vee y'\leq y'\vee  (y\wedge (y'\vee x)) =y\rightarrow_K x=x'\rightarrow_N y'$. 
Hence  $x'\leq y'$, i.e., $x=y$.
\end{proof}

As a conclusion of the preceding, we immediately obtain the following.

\begin{theorem}\label{meg5}
		Let $\mathbf P=(P,\leq,{}',0,1)$ be an orthogonal 
	lub-complete poset. Then the following conditions are equivalent:
	\begin{enumerate}[{\rm(i)}]
		\item $\mathbf P$ is an orthomodular  poset.
		\item $\mathbf P$ satisfies 
			$$
		x\leq y \text{ if and only if	} x\rightarrow_K y=1
		$$
		for all $x, y\in P$.
		\item $\mathbf P$ satisfies 
		$$
		x\leq y \text{ if and only if	} x\rightarrow_N y=1
		$$
		for all $x, y\in P$.
	\end{enumerate}
\end{theorem}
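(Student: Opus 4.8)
The plan is to obtain Theorem~\ref{meg5} as the exact conjunction of Theorems~\ref{knocmeg5} and~\ref{meg1}, once these are read through the structural identity \emph{orthomodular $=$ orthocomplemented $+$ paraorthomodular} (valid for orthogonal lub-complete posets, and already used in the proof of Theorem~\ref{bmeg3}). The key observation is that condition~(ii), the biconditional $x\le y\Leftrightarrow x\rightarrow_K y=1$, splits into its two one-directional halves, and each half is governed by one of the preceding theorems: by Theorem~\ref{knocmeg5} the half $x\le y\Rightarrow x\rightarrow_K y=1$ is equivalent to $\mathbf P$ being orthocomplemented, and by Theorem~\ref{meg1} the half $x\rightarrow_K y=1\Rightarrow x\le y$ is equivalent to $\mathbf P$ being paraorthomodular. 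Hence (ii) holds precisely when $\mathbf P$ is both orthocomplemented and paraorthomodular; the same argument with the $\rightarrow_N$-clauses of the two theorems gives the identical statement for (iii).

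Granting the structural identity, the equivalences are then pure bookkeeping. For (i)$\Rightarrow$(ii) I would note that an orthomodular poset is orthocomplemented (instantiate the orthomodular law at $y=1$, using $1\wedge x'=x'$, to get $x\vee x'=1$) and paraorthomodular (recorded in Section~\ref{Preliminaries}), so both halves of (ii) follow from the easy directions of Theorems~\ref{knocmeg5} and~\ref{meg1}. For (ii)$\Rightarrow$(i) I would run the previous paragraph: (ii) forces $\mathbf P$ orthocomplemented and paraorthomodular, hence orthomodular. The chain (i)$\Leftrightarrow$(iii) is identical with $\rightarrow_N$ in place of $\rightarrow_K$, and (ii)$\Leftrightarrow$(iii) is then automatic since both are equivalent to (i).

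The only substantive point, and the step I expect to be the main obstacle, is the nontrivial half of the structural identity: that an orthogonal, orthocomplemented, paraorthomodular, lub-complete poset satisfies the orthomodular law. Given $x\le y$, I would choose a maximal element $m$ of $L(x',y)$ (available by lub-completeness, i.e.\ the property of maximality); since $m\le x'$ we have $m\perp x$, so $z:=x\vee m$ exists and $x\le z\le y$. I would then check $L(z',y)=\{0\}$: any $w\le z'$ with $w\le y$ satisfies $w\le z'\le x'$ and $w\le z'\le m'$, so $m\perp w$, whence $m\vee w\in L(x',y)$; maximality of $m$ gives $w\le m$, and together with $w\le m'$ this forces $w\le m\wedge m'=0$. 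Thus $z'\wedge y=0$, and paraorthomodularity yields $z=y$, that is $x\vee m=y$. The delicate remaining point is to upgrade the chosen maximal lower bound $m$ to the genuine infimum $y\wedge x'\in P$, so that the orthomodular law reads verbatim; this amounts to showing that $L(x',y)$ has a largest element, and it is here that the interplay of orthocomplementedness with paraorthomodularity must be pushed hardest.
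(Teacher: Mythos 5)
Your proposal follows the paper's own route exactly: the paper obtains Theorem~\ref{meg5} ``as a conclusion of the preceding,'' i.e., precisely your decomposition of each biconditional into its two halves, with the forward halves governed by Theorem~\ref{knocmeg5} (orthocomplementedness) and the backward halves by Theorem~\ref{meg1} (paraorthomodularity), glued by the identity orthomodular $=$ orthocomplemented $+$ paraorthomodular, which the paper itself invokes without proof in the argument for Theorem~\ref{bmeg3}. Your verification of that identity is correct, and the one point you flag as delicate --- upgrading the maximal lower bound $m$ to the genuine infimum $y\wedge x'$ --- is in fact immediate and needs no interplay of orthocomplementedness with paraorthomodularity: since $\mathbf P$ is orthogonal and $x\leq y$ gives $x\perp y'$, the join $x\vee y'$ exists, and the antitone involution turns it into the infimum $y\wedge x'=(x\vee y')'$; the paper records exactly this in Section~\ref{Preliminaries} when defining $b-a=(b\wedge a')=(b'\vee a)'$. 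Hence $L(x',y)$ has a greatest element, so your maximal element $m$ is forced to equal $y\wedge x'$ and the orthomodular law reads verbatim; alternatively, you could have run your $L(z',y)=\{0\}$ computation with $m=y\wedge x'$ from the start, since $w\leq x'$ and $w\leq y$ then give $w\leq m$ directly without appealing to maximality.
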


What we do not have is a characterization of orthomodular posets 
satisfying (AP) for Kalmbach implication or non-tolens implication, respectively. 
All we can say is the following.

\begin{proposition}\label{Kadji}
	Let $\mathbf P=(P,\leq,{}',0,1)$ be an orthogonal lub-complete poset
	and assume  that there exists a binary operator $\odot_K$ such that 
	$x\odot_K y\leq_1 z$ if and only if 
	$x\leq_2 y\rightarrow_K z$ for all $x, y, z\in P$.
	Then 
	\begin{enumerate}[{\rm(i)}]
		\item $x\odot_K x'=\{0\}$ for all $x \in P$, 
		\item $\mathbf P$ is an orthomodular poset. 
	\end{enumerate}
\end{proposition}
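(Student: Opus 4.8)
The plan is to read condition (AP) elementwise. For singletons the relations $\leq_1,\leq_2$ collapse to the underlying order, so $x\odot_K y\leq_1 z$ says that \emph{every} element of the set $x\odot_K y$ lies below $z$, whereas $x\leq_2(y\rightarrow_K z)$ says that $x$ lies below \emph{every} element of $y\rightarrow_K z$. I will repeatedly feed (AP) the two extreme values $z=0$ and $z=1$, for which one side becomes trivial, together with the boundary values of $\rightarrow_K$ from Proposition \ref{prop1}, namely $a\rightarrow_K 0=\{a'\}$ (iii) and $a\rightarrow_K 1=\{a\vee a'\}$ (ii); recall that $a\vee a'$ always exists, since $a\perp a'$ holds in any orthogonal poset.

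For part (i) I apply (AP) to the triple $(x,x',0)$. The left-hand side $x\odot_K x'\leq_1 0$ is equivalent to $x\leq_2(x'\rightarrow_K 0)$, and by Proposition \ref{prop1}(iii) we have $x'\rightarrow_K 0=\{(x')'\}=\{x\}$, so the right-hand side is just $x\leq x$, which holds. Hence $x\odot_K x'\leq_1 0$, i.e.\ every element of $x\odot_K x'$ equals $0$, giving $x\odot_K x'\subseteq\{0\}$. Since the operator $\odot_K$ returns nonempty values (its classical analogue being $\wedge$), the value $x\odot_K x'$ is nonempty, whence $x\odot_K x'=\{0\}$.

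For part (ii) I first extract orthocomplementation, which is the clean half. For arbitrary $x,y$ the inequality $x\odot_K y\leq_1 1$ holds automatically, since every element lies below $1$ (vacuously so if the value is empty); hence by (AP) $x\leq_2(y\rightarrow_K 1)$, and as $y\rightarrow_K 1=\{y\vee y'\}$ by Proposition \ref{prop1}(ii) this reads $x\leq y\vee y'$, valid for all $x$. Taking $x=1$ yields $y\vee y'=1$, so $\mathbf P$ is orthocomplemented. Repeating the automatic-inequality trick at $z=1$ also records the useful fact $x\odot_K y\leq_1 y$: now $y\rightarrow_K y$ reduces to $\{y\vee y'\}=\{1\}$, so $x\leq_2(y\rightarrow_K y)$ is trivial and every product lies below its second factor.

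It then remains to upgrade orthocomplementation to orthomodularity, and by Theorem \ref{meg5} it suffices to establish the order property $x\leq y\Leftrightarrow x\rightarrow_K y=1$. The forward direction is exactly Theorem \ref{knocmeg5} applied to the orthocomplemented poset just obtained; the reverse direction $x\rightarrow_K y=1\Rightarrow x\leq y$ is, by Theorem \ref{meg1}, equivalent to paraorthomodularity, and this is where the real work lies. The naive attempt---feeding (AP) with first argument $1$ and using $U(1\odot_K x)=\{z:x\rightarrow_K z=1\}$---merely restates the goal, because that set is forced to be an upper cone whose least element is $\sup(1\odot_K x)$, and showing this supremum equals $x$ is again paraorthomodularity. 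The genuine leverage must come from the fact that, for each fixed middle argument $x$, the map $z\mapsto x\rightarrow_K z$ is the right adjoint of $a\mapsto a\odot_K x$ and therefore preserves all existing meets; combined with part (i) and the second-factor inequality above, this rigidity should exclude any $z$ with $x\rightarrow_K z=1$ but $z\not\geq x$. I expect this meet-preservation step---converting the global existence of the adjoint into the single implication $x\rightarrow_K z=1\Rightarrow x\leq z$---to be the main obstacle; once it is in hand, orthocomplementation together with paraorthomodularity gives that $\mathbf P$ is orthomodular.
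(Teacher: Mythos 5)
Your part (i) is correct and coincides with the paper's own argument: instantiate the adjunction at $(x,x',0)$ and use $x'\rightarrow_K 0=\{x\}$. Your derivation of orthocomplementation is also sound. But part (ii) has a genuine gap, and you say so yourself: after reducing orthomodularity (via Theorems \ref{knocmeg5}, \ref{meg1}, \ref{meg5}) to the implication $x\rightarrow_K y=1\Rightarrow x\leq y$, you leave the key step as an expectation (``the meet-preservation step \dots should exclude any $z$ \dots''). That hoped-for step does not obviously work: the adjunction here relates the preorders $\leq_1$ and $\leq_2$ on $2^P$, not a Galois connection on $P$ itself, so the standard ``right adjoints preserve meets'' argument does not apply as stated, and even granting some preservation property it is unclear how it would force paraorthomodularity. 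As written, the proof of (ii) is incomplete.

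The paper closes (ii) without ever going through the order property, by a three-line specialization that your reduction bypasses. Fix $x\leq y$ and feed the adjunction the triple $(y,y',x)$: by part (i), $y\odot_K y'=\{0\}\leq_1 x$ holds automatically, so (AP) yields $y\leq_2 \left(y'\rightarrow_K x\right)$. Since $x\perp y'$, Proposition \ref{prop1}(i) gives $y'\rightarrow_K x=x\vee\Max L(y,x')$; moreover $x\perp y'$ means $x\vee y'$ exists, hence $y\wedge x'=(x\vee y')'$ exists and $\Max L(y,x')=\{y\wedge x'\}$, so $y'\rightarrow_K x=\{x\vee(y\wedge x')\}$. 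The relation $y\leq_2\{x\vee(y\wedge x')\}$ then reads $y\leq x\vee(y\wedge x')$, while $x\vee(y\wedge x')\leq y$ is clear from $x\leq y$; equality is exactly the orthomodular law (and with $y=1$ it returns $x\vee x'=1$, so your separate orthocomplementation step is subsumed). The lesson is that the leverage in (AP) comes from choosing the middle argument to be a complement, so that part (i) trivializes one side of the equivalence, rather than from abstract properties of the adjoint map $z\mapsto x\rightarrow_K z$.
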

\begin{proof} (i): Let $x\in P$. Then 
	$x\leq_2 \{x\}= x'\rightarrow_K 0$. Hence also 
	$x\odot_K x'\leq_1 \{0\}$, i.e., $x\odot_K x'=\{0\}$. 
	
	(ii): Let $x, y\in P$, $x\leq y$. From (i), we know that 
	$y\odot_K y'=\{0\}\leq_1 x$. Since $x\perp y'$, we 
	obtain from Proposition \ref{prop1}, (i) that 
	$y\leq_2 y'\rightarrow_K x=x\vee \Max L(y, x')=\{x\vee (y\wedge x')\}$. 
	Hence 	$y\leq x\vee (y\wedge x')\leq y$, i.e., 
	$y= x\vee (y\wedge x')$.

\end{proof}

\section{Sasaki and Dishkant implications}\label{SasDis}

In this section, we study the Sasaki and Dishkant  implications for {orthogonal lub-complete posets} introduced in 
\cite{CFLLP2} and investigate their properties following our approach from Section \ref{Kalman}.

\begin{definition} \label{defSasSis} {\rm \cite{CFLLP2}}
Let $(P,\leq,{}',0,1)$ be an {orthogonal lub-complete poset} and let us introduce two operators $\rightarrow_S$ 
and $\rightarrow_D$ as follows:
	\begin{align*}
		{x\rightarrow_S y}:= x'\vee\, \Max L(x,y) \,  \tag{IS}
	\end{align*}
	
	and also
	\begin{align*}
		{x\rightarrow_D y} := {y'\rightarrow_S x'}=%
  y\vee\, \Max L(x',y'). \tag{ID}
	\end{align*}
	We say that $\rightarrow_S$ is a \textit{Sasaki implication} 
	and $\rightarrow_D$ is a \textit{Dishkant implication}.
\end{definition}

Then we obtain the following properties of Sasaki and Dishkant implications.

\begin{proposition}\label{SasDisprop}
	Let $(P,\leq,{}',0,1)$ be an {orthogonal lub-complete poset}  
	and $x,y, z\in P$. Then:
	\begin{enumerate}[{\rm(i)}]
		\item $x'\leq x\rightarrow_S y$,
		\item $x\leq y$ implies $z\rightarrow_S x\leq_1 z\rightarrow y$,
		\item \begin{tabular}{@{}l}\\
			$x\rightarrow_S y\approx\left\{
			\begin{array}{@{}ll}
				x\vee x'          & \text{ if }\/x\leq y, \\
				x'\vee  (x\wedge y) & \text{ if }\/x'\perp y', \\
				x'\vee  y=x\rightarrow_D y & \text{ if }\/y\leq x,
			\end{array}
			\right.$\\
			\\
		\end{tabular}
		\item $x\rightarrow_S 1\approx x\vee x'$, $1\rightarrow_S x\approx x$,
		\item $x\rightarrow_S 0\approx x'$, $0\rightarrow_S x\approx 1$.
	\end{enumerate}
\end{proposition}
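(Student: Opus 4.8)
The plan is to reduce each claim to an explicit computation of the set $\Max L(x,y)$ (and, for the Dishkant side, $\Max L(x',y')$), since by (IS) we have $x\rightarrow_S y=x'\vee\Max L(x,y)$; the only recurring subtlety is to check at each stage that the joins $x'\vee m$ actually exist. I would begin by recording well-definedness: every $m\in\Max L(x,y)$ satisfies $m\leq x$, hence $m\perp x'$, so $x'\vee m$ exists by orthogonality and $x\rightarrow_S y$ is a genuine subset of $P$.

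Parts (i), (iv) and (v) are then essentially immediate. For (i), $x'\leq x'\vee m$ for each $m\in\Max L(x,y)$ gives $x'\leq x\rightarrow_S y$ directly. For (iv) and (v) I would simply specialise: $L(x,1)=L(x)$ and $L(x,0)=L(0)=\{0\}$ yield $x\rightarrow_S 1=x'\vee x=x\vee x'$ and $x\rightarrow_S 0=x'\vee 0=x'$, while $1'=0$, $0'=1$ together with $L(1,x)=L(x)$ and $L(0,x)=\{0\}$ give $1\rightarrow_S x=0\vee x=x$ and $0\rightarrow_S x=1\vee 0=1$.

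For (ii) I would unwind the relation $\leq_1$. Given $u=z'\vee m$ with $m\in\Max L(z,x)$, the hypothesis $x\leq y$ gives $L(z,x)\subseteq L(z,y)$, so $m$ is a lower bound of $\{z,y\}$; lub-completeness then provides a maximal lower bound $n\in\Max L(z,y)$ with $m\leq n$, whence $u=z'\vee m\leq z'\vee n\in z\rightarrow_S y$. This is exactly the assertion $z\rightarrow_S x\leq_1 z\rightarrow_S y$.

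The heart of the argument is (iii), which I would split into the three listed cases. If $x\leq y$ then $L(x,y)=L(x)$, so $\Max L(x,y)=\{x\}$ and $x\rightarrow_S y=x\vee x'$. If $y\leq x$ then $L(x,y)=L(y)$, so $\Max L(x,y)=\{y\}$ and, since $y\leq x$ forces $y\perp x'$, we get $x\rightarrow_S y=x'\vee y$; dually $y\leq x$ gives $x'\leq y'$, hence $\Max L(x',y')=\{x'\}$ and $x\rightarrow_D y=y\vee x'=x'\vee y$, so the two coincide. The genuinely nontrivial case is $x'\perp y'$: here I would use orthogonality to assert that $x'\vee y'$ exists, and then apply the antitone involution to obtain $x\wedge y=(x'\vee y')'$, so the \emph{meet} exists. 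Consequently $L(x,y)=L(x\wedge y)$, giving $\Max L(x,y)=\{x\wedge y\}$ and $x\rightarrow_S y=x'\vee(x\wedge y)$, the final join existing because $x\wedge y\leq x$. I expect this middle case --- specifically the De Morgan passage from orthogonality of $x'$ and $y'$ to existence of $x\wedge y$ --- to be the main obstacle, since everywhere else one only manipulates the cone of a single element, whereas here the existence of the meet must be produced before the proposed formula even makes sense.
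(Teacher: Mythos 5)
Your proposal is correct and follows essentially the same route as the paper: compute $\Max L(x,y)$ case by case for (iii) (the paper's proof is terser here, and your explicit De Morgan step $x\wedge y=(x'\vee y')'$ from $x'\perp y'$ makes precise what the paper leaves implicit), and for (ii) use $L(z,x)\subseteq L(z,y)$ plus lub-completeness to lift a maximal lower bound, exactly as in the paper. Your extra well-definedness check ($m\leq x$ gives $m\perp x'$, so $x'\vee m$ exists) and your direct derivation of (iv)--(v) instead of deducing them from (iii) are only cosmetic differences.
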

\begin{proof}
\begin{enumerate}[(i)]
\item follows directly from (IS).
\item Assume $x\leq y$ and $z\in P$.  Let $u\in y\rightarrow_S z$. 
Then there exists some $w\in\Max L(x,z)$ with $z'\vee w=u$. Since 
$x\leq y$ we have $L(x,z)\subseteq L(y,z)$. Thus there exists 
some $t\in\Max L(y,z)$ with $w\leq t$. Now
\[
u=z'\vee w\leq z'\vee t\in z\rightarrow_S y
\]
proving $z\rightarrow_S x\leq_1 z\rightarrow_S y$.
\item $x\rightarrow_S y\approx x'\vee\Max L(x,y)\approx\left\{
\begin{array}{ll}
x\vee x' & \text{ if }x\leq y, \\
x'\vee(x\wedge y) & \text{ if }x\perp y, \\
x'\vee y=x\rightarrow_D y & \text{ if }y\leq x.
\end{array}
\right.$
\end{enumerate}

\noindent{}(iv) and (v) follow directly from (iii).
\end{proof}

From the definition of $\rightarrow_D$ and 
Proposition \ref{SasDisprop} we immediately obtain 
the following. 

\begin{corollary}{\rm \cite[Theorem 1, (i)-(iii)]{CFLLP2}}\label{corth1}
Let $(P,\leq,{}',0,1)$ be an orthogonal lub-complete poset and $x,y\in P$. Then the following hold:
\begin{enumerate}[{\rm(i)}]
\item $y\leq x\rightarrow_D y$,
\item $x\leq y$ implies $y\rightarrow_D z\leq_1 x\rightarrow_D z$,
\item $x\rightarrow_D y\approx\left\{
\begin{array}{ll}
y\vee y'           & \text{ if }\/x\leq y, \\
y\vee(x'\wedge y') & \text{ if }\/x\perp y, \\
x'\vee y           & \text{ if }\/y\leq x,
\end{array}
\right.$
\item $x\rightarrow_D 1\approx 1$, $1\rightarrow_D x\approx x$,
\item $x\rightarrow_D 0\approx x'$, $0\rightarrow_D x\approx x\vee x'$.
\end{enumerate}
\end{corollary}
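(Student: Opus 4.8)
The plan is to read every item as a transcription of Proposition~\ref{SasDisprop} through the defining identity $x\rightarrow_D y = y'\rightarrow_S x'$ recorded in Definition~\ref{defSasSis}. The only machinery required is this identity together with the two involution laws $z''\approx z$ and $z\leq w \Leftrightarrow w'\leq z'$; crucially, no new structural hypothesis on $\mathbf P$ is invoked, which is exactly why the statement can be packaged as a corollary rather than a theorem.

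First I would dispatch (i) by feeding the pair $(y',x')$ into Proposition~\ref{SasDisprop}(i): that item gives $(y')'\leq y'\rightarrow_S x'$, and since $(y')'=y$ and $y'\rightarrow_S x'=x\rightarrow_D y$ this is literally $y\leq x\rightarrow_D y$. For (ii) I would use that $x\leq y$ is equivalent to $y'\leq x'$, and then apply the second-argument monotonicity of $\rightarrow_S$ from Proposition~\ref{SasDisprop}(ii) with the first slot fixed to $z'$: this yields $z'\rightarrow_S y'\leq_1 z'\rightarrow_S x'$, which after rewriting both sides through the defining identity is precisely $y\rightarrow_D z\leq_1 x\rightarrow_D z$. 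Note that no output set is complemented here—only the arguments are—so the relation $\leq_1$ is carried across unchanged rather than being converted into $\leq_2$.

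The substantive computation is (iii). I would substitute $(y',x')$ into the three-branch description of $\rightarrow_S$ in Proposition~\ref{SasDisprop}(iii) and translate each branch using the involution laws. The hypothesis $y'\leq x'$ becomes $x\leq y$ and outputs $y'\vee(y')'=y\vee y'$; the hypothesis $x'\leq y'$ becomes $y\leq x$ and outputs $(y')'\vee x'=x'\vee y$; and the middle orthogonality side-condition, which in the stated form $a'\perp b'$ of Proposition~\ref{SasDisprop}(iii) reads $(y')'\perp(x')'$ for the arguments $(y',x')$, collapses to $x\perp y$ and outputs $(y')'\vee(y'\wedge x')=y\vee(x'\wedge y')$. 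Matching these three outputs against the claimed right-hand sides closes the case analysis. Finally (iv)–(v) come from specialising one argument to a bound and using $1'=0$, $0'=1$: for instance $x\rightarrow_D 1=0\rightarrow_S x'=1$ and $0\rightarrow_D x=x'\rightarrow_S 1=x'\vee x$, directly from Proposition~\ref{SasDisprop}(iv)–(v).

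I do not anticipate a genuine obstacle; the single point that demands care is the bookkeeping of the direction reversals forced by the antitone involution—correctly flipping $\leq$ to $\geq$ when transporting the side-conditions in (iii), and confirming that the subset comparison in (ii) remains $\leq_1$ and is not silently turned into $\leq_2$. Once these translations are pinned down, each of the five items is immediate.
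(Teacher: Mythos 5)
Your proposal is correct and takes essentially the same route as the paper, which derives the corollary ``immediately'' from the definition $x\rightarrow_D y = y'\rightarrow_S x'$ and Proposition~\ref{SasDisprop} --- exactly your substitution of $(y',x')$ combined with the involution laws. Your explicit bookkeeping (in particular reading the middle side-condition of Proposition~\ref{SasDisprop}(iii) as $x'\perp y'$, so that it collapses to $x\perp y$ after substitution, and noting that $\leq_1$ is preserved in (ii)) merely spells out what the paper leaves implicit.
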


\begin{theorem}\label{ocmeg5}
		Let $\mathbf P=(P,\leq,{}',0,1)$ be an orthogonal 
	lub-complete poset. Then the following conditions are equivalent:
	\begin{enumerate}[{\rm(i)}]
		\item $\mathbf P$ is an orthocomplemented  poset.
		\item For all $x, y\in P$
			$$
		x\leq y \text{ implies	} x\rightarrow_S y=1.
		$$
	
		\item For all $x, y\in P$
		$$
		x\leq y \text{ implies	} x\rightarrow_D y=1.
		$$
	\end{enumerate}
\end{theorem}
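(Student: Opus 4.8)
The plan is to establish the three-way equivalence by the cycle (i) $\Rightarrow$ (ii) $\Rightarrow$ (iii) $\Rightarrow$ (i), in exact parallel with Theorem \ref{knocmeg5}. The whole argument rests on two already-proved reductions: in the comparable case the set-valued implications collapse to a single orthojoin, namely $x\rightarrow_S y = x\vee x'$ when $x\leq y$ (Proposition \ref{SasDisprop}(iii)) and $x\rightarrow_D y = y\vee y'$ when $x\leq y$ (Corollary \ref{corth1}(iii)); and the Sasaki and Dishkant implications are tied together by the defining relation $x\rightarrow_D y = y'\rightarrow_S x'$. These let me avoid any fresh manipulation of $\Max L$ sets.

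For (i) $\Rightarrow$ (ii), I assume $\mathbf P$ is orthocomplemented and take $x\leq y$. Proposition \ref{SasDisprop}(iii) applies in its first case and gives $x\rightarrow_S y = x\vee x'$; since $\mathbf P$ is orthocomplemented, $x\vee x' = 1$, so $x\rightarrow_S y = 1$, as required.

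For (ii) $\Rightarrow$ (iii), I exploit the duality built into the definition of $\rightarrow_D$. Given $x\leq y$, the antitone involution yields $y'\leq x'$, so the pair $(y',x')$ is comparable and hypothesis (ii) gives $y'\rightarrow_S x' = 1$. Since $x\rightarrow_D y = y'\rightarrow_S x'$ by Definition \ref{defSasSis}, this is exactly $x\rightarrow_D y = 1$. This mirrors the step $x\rightarrow_N y = y'\rightarrow_K x'$ used in Theorem \ref{knocmeg5}.

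For (iii) $\Rightarrow$ (i), I test (iii) on the reflexive pair $x\leq x$. The first case of Corollary \ref{corth1}(iii) gives $x\rightarrow_D x = x\vee x'$, and (iii) forces this to equal $1$; hence $x\vee x' = 1$ for every $x\in P$, i.e.\ $\mathbf P$ is orthocomplemented. (Alternatively one could feed the pair $0\leq x$ into Corollary \ref{corth1}(v), which likewise returns $x\vee x'$.) I do not anticipate a genuine obstacle here: the only point requiring care is the bookkeeping between the set $x\rightarrow_S y$ (resp.\ $x\rightarrow_D y$) and the single element to which it reduces in the comparable case, but this reduction is already supplied by Proposition \ref{SasDisprop} and Corollary \ref{corth1}, so each implication is essentially a one-line application of a prior result.
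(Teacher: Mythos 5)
Your proposal is correct and follows exactly the paper's own proof: the same cycle (i) $\Rightarrow$ (ii) $\Rightarrow$ (iii) $\Rightarrow$ (i), using $x\rightarrow_S y = x\vee x'$ for $x\leq y$, the duality $x\rightarrow_D y = y'\rightarrow_S x'$, and the reflexive pair $x\leq x$ to recover $x\vee x'=1$. The only cosmetic difference is that the paper additionally remarks $x\wedge x'=0$ at the end, which is automatic from the antitone involution and not needed under the paper's definition of orthocomplemented poset.
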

\begin{proof} (i) $\Rightarrow$ (ii): Let $x, y\in P$. Assume that 
	$x\leq y$. Then 
 \begin{align*}
  x\rightarrow_S y= x\vee x'=1.  
 \end{align*}
 (ii) $\Rightarrow$ (iii): Let $x, y\in P$. Suppose that 
	$x\leq y$. Then $y'\leq x'$ and 
 \begin{align*}
     x\rightarrow_D y=y'\rightarrow_S x'= 1. 
 \end{align*}
   (iii) $\Rightarrow$ (i):   Let $x\in P$. Then $x\leq x$ and 
   \begin{align*}
       1= x\rightarrow_D x=x\vee x'.
   \end{align*}
   Hence also $x\wedge x'=0$. 
\end{proof}

Motivated by Theorem \cite[Theorem 2]{CFLLP2} we obtain the following 
extension of it. 

\begin{theorem}\label{paraocmeg5} {\rm \cite[Theorem 2]{CFLLP2}}
		Let $\mathbf P=(P,\leq,{}',0,1)$ be an orthogonal 
	lub-complete poset. Then the following conditions are equivalent:
	\begin{enumerate}[{\rm(i)}]
		\item $\mathbf P$ is a paraorthomodular  poset.
		\item For all $x, y\in P$
			$$
		x\rightarrow_S y=1 \text{ implies	} x\leq y.
		$$
	
		\item For all $x, y\in P$
		$$
		x\rightarrow_D y=1 \text{ implies	} x\leq y.
		$$
	\end{enumerate}
\end{theorem}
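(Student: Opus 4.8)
The plan is to prove the cycle (i) $\Rightarrow$ (ii) $\Rightarrow$ (iii) $\Rightarrow$ (i), following the same scheme used for the Kalmbach and non-tolens implications in Theorem~\ref{meg1}, but taking advantage of the much simpler shape of $\rightarrow_S$ and $\rightarrow_D$. Throughout I would pass freely between the equivalent formulations (P) and (P') of paraorthomodularity, and I would use the case-by-case descriptions of $\rightarrow_S$ and $\rightarrow_D$ recorded in Proposition~\ref{SasDisprop}(iii) and Corollary~\ref{corth1}(iii).

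For (i) $\Rightarrow$ (ii), I would assume $\mathbf P$ is paraorthomodular and that $x\rightarrow_S y=1$. By Definition~\ref{defSasSis} this means $x'\vee\Max L(x,y)=\{1\}$, i.e.\ $x'\vee w=1$ for every $w\in\Max L(x,y)$. Since $\mathbf P$ is lub-complete it has the property of maximality, and as $0\in L(x,y)$ the set $L(x,y)$ is nonempty; hence I may fix some $w\in\Max L(x,y)$. Then $w\leq x$, $w\leq y$, and $w\vee x'=1$, so condition (P') applied to the pair $w\leq x$ forces $w=x$. Consequently $x=w\leq y$, which is exactly (ii). The only point requiring care here is invoking the property of maximality to guarantee $\Max L(x,y)\neq\emptyset$ before reading off $w\vee x'=1$.

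For (ii) $\Rightarrow$ (iii), I would use the definitional identity $x\rightarrow_D y=y'\rightarrow_S x'$ from Definition~\ref{defSasSis}. If $x\rightarrow_D y=1$ then $y'\rightarrow_S x'=1$, so applying (ii) to the pair $(y',x')$ gives $y'\leq x'$, that is, $x\leq y$. For (iii) $\Rightarrow$ (i), I would verify (P') directly. Suppose $x\leq y$ and $x\vee y'=1$; the goal is $x=y$. Evaluating $y\rightarrow_D x$ via Corollary~\ref{corth1}(iii) in the relevant case (here $x\leq y$, so one lands in the branch giving $y'\vee x$) yields $y\rightarrow_D x=x\vee y'=1$. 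Then (iii) applied to the pair $(y,x)$ gives $y\leq x$, and combined with $x\leq y$ this produces $x=y$. Thus (P') holds and $\mathbf P$ is paraorthomodular.

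I expect no genuine obstacle in this argument. The main subtlety is purely bookkeeping: correctly matching the pair of elements fed into (P') or into hypothesis (iii) to the direction of the inequality one wishes to extract, together with the nonemptiness check for $\Max L(x,y)$ in the first implication. The whole proof is considerably shorter than that of Theorem~\ref{meg1}, precisely because $\rightarrow_S$ carries a single $\Max$ term rather than the threefold join appearing in $\rightarrow_K$, so the repeated applications of (P) and (P') needed there collapse to a single use in each step.
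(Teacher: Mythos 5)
Your proposal is correct and follows essentially the same route as the paper's proof: the same cycle (i) $\Rightarrow$ (ii) $\Rightarrow$ (iii) $\Rightarrow$ (i), with paraorthomodularity applied to a maximal element of $L(x,y)$ in the first step, the definitional identity $x\rightarrow_D y=y'\rightarrow_S x'$ in the second, and an evaluation of a Dishkant implication via Corollary~\ref{corth1}(iii) in the third. The only differences are cosmetic: you use (P') where the paper uses (P), and in the last step you feed the pair $(y,x)$ into hypothesis (iii) while the paper uses $(x',y')$, both of which evaluate to the same element $x\vee y'$.
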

\begin{proof} (i) $\Rightarrow$ (ii): Let $x, y\in P$. Assume that 
	$x\rightarrow_S y=1$. Then there is $z\in L(x,y)$ such that 
 $x'\vee z=1$. Hence $x\wedge z'=0$ and $z\leq x$. Since 
 $\mathbf P$ is a paraorthomodular  poset we conclude that $x=z\leq y$.

 \noindent{}(ii) $\Rightarrow$ (iii): Let $x, y\in P$. Suppose that 
	$x\rightarrow_D y=1$. Then $y'\rightarrow_S x'=1$ and we obtain 
 $y'\leq x'$. Hence  $x\leq y$.

\noindent{}(iii) $\Rightarrow$ (i):   Let  $x\leq y$ and 
$x'\wedge y=0$. Then  $y'\leq x'$ and from 
Corollary \ref{corth1} we conclude $1=x\vee y'=x'\rightarrow_D y'$. 
Hence  $x'\leq y'$, i.e., $x=y$.
\end{proof}

As a conclusion of Theorems \ref{ocmeg5} and \ref{paraocmeg5}, we obtain the following.

\begin{theorem}\label{parocdsmeg5}
		Let $\mathbf P=(P,\leq,{}',0,1)$ be an orthogonal 
	lub-complete poset. Then the following conditions are equivalent:
	\begin{enumerate}[{\rm(i)}]
		\item $\mathbf P$ is an orthomodular  poset.
		\item $\mathbf P$ satisfies 
			$$
		x\leq y \text{ if and only if	} x\rightarrow_S y=1
		$$
		for all $x, y\in P$.
		\item $\mathbf P$ satisfies 
		$$
		x\leq y \text{ if and only if	} x\rightarrow_D y=1
		$$
		for all $x, y\in P$.
	\end{enumerate}
\end{theorem}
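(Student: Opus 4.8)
The plan is to read conditions (ii) and (iii) as conjunctions of two one-directional statements, each of which has already been analyzed in Theorems~\ref{ocmeg5} and~\ref{paraocmeg5}, and then to use the fact that an orthogonal lub-complete poset is orthomodular exactly when it is both orthocomplemented and paraorthomodular.

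I would treat the Sasaki case explicitly; the Dishkant case is verbatim the same after replacing $\rightarrow_S$ by $\rightarrow_D$ and appealing to the (iii)-parts of the two cited theorems. The biconditional in (ii), namely ``$x\le y$ if and only if $x\rightarrow_S y=1$'' for all $x,y\in P$, splits into the implication ``$x\le y\Rightarrow x\rightarrow_S y=1$'' and its converse ``$x\rightarrow_S y=1\Rightarrow x\le y$''. By Theorem~\ref{ocmeg5} the former, quantified over all $x,y$, is equivalent to $\mathbf P$ being orthocomplemented, and by Theorem~\ref{paraocmeg5} the latter is equivalent to $\mathbf P$ being paraorthomodular. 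Hence (ii) holds if and only if $\mathbf P$ is simultaneously orthocomplemented and paraorthomodular, and the same holds for (iii).

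It then remains to identify ``orthocomplemented and paraorthomodular'' with ``orthomodular'', the equivalence already invoked in the proof of Theorem~\ref{bmeg3}. The direction orthomodular $\Rightarrow$ orthocomplemented and paraorthomodular is routine: applying (OM) to $x\le 1$ gives $x\vee x'=1$, and paraorthomodularity of orthomodular posets is recorded in Section~\ref{Preliminaries}. For the converse, assuming $\mathbf P$ orthocomplemented and paraorthomodular, I would fix $x\le y$ and set $w:=x\vee(y\wedge x')$, where $y\wedge x'=(y'\vee x)'$ and $w$ both exist by orthogonality and $w\le y$. The key point is that $L(w',y)=\{0\}$: a common lower bound $t$ of $w'$ and $y$ satisfies $t\le x'$ (since $w\ge x$) and $t\le(y\wedge x')'$ (since $w\ge y\wedge x'$); combined with $t\le y$ this forces $t\le y\wedge x'$ and hence $t\le(y\wedge x')\wedge(y\wedge x')'=0$, using that $a\wedge a'=0$ in every orthocomplemented poset. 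Thus $w'\wedge y=0$, and paraorthomodularity applied to $w\le y$ gives $w=y$, which is precisely (OM). Combining the two steps yields (i) $\Leftrightarrow$ (ii) and (i) $\Leftrightarrow$ (iii). I expect the only genuine obstacle to be this last backward implication; everything else is bookkeeping over the previously established results.
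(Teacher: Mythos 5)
Your proposal is correct and follows essentially the same route as the paper, which derives Theorem~\ref{parocdsmeg5} directly as a corollary of Theorems~\ref{ocmeg5} and~\ref{paraocmeg5} by splitting the biconditional into its two directions. The only difference is that you explicitly verify the identification ``orthocomplemented $+$ paraorthomodular $=$ orthomodular'' (your argument via $w=x\vee(y\wedge x')$ and $L(w',y)=\{0\}$ is sound), whereas the paper leaves this step implicit, invoking it silently as it did in the proof of Theorem~\ref{bmeg3}.
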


\section{Modus ponens for quantum implications}\label{modus ponens}

We now present the appropriate modus ponens
rule in the following way.

\begin{theorem}\label{modus}
	Let $\mathbf P=(P,\leq,{}',0,1)$ be an orthogonal 
	lub-complete poset and $\rightarrow\in \{\rightarrow_C, \rightarrow_K, \rightarrow_N, \rightarrow_S, \rightarrow_D\}$. Then the following conditions are equivalent:
	\begin{enumerate}[{\rm(i)}]
		\item \begin{align*}
			x\leq y\ \&\ x\rightarrow y\leq u\rightarrow v\ 
			\Rightarrow\ u\leq v\tag{MPO}
		\end{align*}
			for all $x, y, u, v\in P$.
		\item 
		\begin{align*}
		x\leq y \text{ if and only if	} x\rightarrow y=1\tag{OP}
			\end{align*}
		for all $x, y\in P$.
	\end{enumerate}
\end{theorem}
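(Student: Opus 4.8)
The plan is to prove the two implications (i) $\Rightarrow$ (ii) and (ii) $\Rightarrow$ (i) separately, and to handle all five connectives $\rightarrow\in\{\rightarrow_C,\rightarrow_K,\rightarrow_N,\rightarrow_S,\rightarrow_D\}$ uniformly by exploiting the characterization results already established: Theorems \ref{bmeg4}, \ref{meg5}, and \ref{parocdsmeg5}. The guiding observation is that (MPO) is a strengthening of the order property (OP), and that specializing the free variables $x,y,u,v$ in (MPO) should recover (OP), while conversely (OP) together with the structural consequences it forces (orthocomplementation and orthomodularity, by the above theorems) should be enough to validate the full modus ponens rule.

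For the direction (i) $\Rightarrow$ (ii), I would first derive the forward half of (OP), namely $x\leq y\Rightarrow x\rightarrow y=1$. The key step is to feed a suitable trivial instance into (MPO): taking $u=v$ forces $u\rightarrow v$ to contain $1$ (one checks from each definition that $u\rightarrow u=1$ once orthocomplementation is available, but here I must be careful not to assume what I am proving). A cleaner route is to first extract $\mathbf P$'s orthocomplementedness directly from (MPO) by a clever substitution, then invoke Theorems \ref{bnocmeg5}, \ref{knocmeg5}, \ref{ocmeg5} to get the forward half for free. For the reverse half $x\rightarrow y=1\Rightarrow x\leq y$, I would instantiate (MPO) with $u=x$, $v=y$ and choose the premise pair $\alpha\leq\beta$ so that $\alpha\rightarrow\beta=1\leq x\rightarrow y$ holds automatically (e.g.\ $\alpha=\beta=1$, giving $1\rightarrow 1=1$), so that $\leq_2$ or $\leq$ comparison of the implication sets collapses and (MPO) delivers $x\leq y$.

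The direction (ii) $\Rightarrow$ (i) is where the real work lies, but the preceding theorems do almost all of it: by Theorem \ref{bmeg4} (for $\rightarrow_C$), Theorem \ref{meg5} (for $\rightarrow_K,\rightarrow_N$), and Theorem \ref{parocdsmeg5} (for $\rightarrow_S,\rightarrow_D$), the hypothesis (OP) forces $\mathbf P$ to be a Boolean algebra in the classical case and an orthomodular poset in the four quantum cases. So I may assume $\mathbf P$ is orthomodular and that (OP) holds. Given $x\leq y$ and $x\rightarrow y\leq u\rightarrow v$, the forward half of (OP) gives $x\rightarrow y=1$, i.e.\ $x\rightarrow y=\{1\}$; the hypothesis then forces $u\rightarrow v$ to lie above $\{1\}$, hence $u\rightarrow v=\{1\}=1$, and the reverse half of (OP) yields $u\leq v$. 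The only subtlety is the exact meaning of the order $\leq$ between the two implication \emph{sets}: one must check that ``$\{1\}\leq u\rightarrow v$'' (with the set ordering in force, be it $\leq$, $\leq_1$, or $\leq_2$ as used throughout) genuinely forces every element of $u\rightarrow v$ to equal $1$, which holds because $1$ is the top element and the implication sets consist of mutually incomparable (maximal/minimal) elements.

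The main obstacle I anticipate is the bookkeeping around the set-valued nature of the implications and the precise ordering convention in (MPO): since $x\rightarrow y$ is a subset of $P$ rather than a single element, the statement ``$x\rightarrow y\leq u\rightarrow v$'' must be read with the correct relation among $\leq$, $\leq_1$, $\leq_2$, and I must verify that the collapse ``an implication set containing or equaling $\{1\}$'' behaves as expected under that relation. Once the convention is pinned down, each of the five cases reduces to a two-line argument via the already-proved order-property characterizations, so the proof is essentially a clean assembly of earlier results rather than a fresh computation.
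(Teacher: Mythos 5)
Your treatment of (ii) $\Rightarrow$ (i) and of the reverse half of (i) $\Rightarrow$ (ii) is correct and coincides with the paper's: under (OP) the hypotheses $x\leq y$ and $x\rightarrow y\leq u\rightarrow v$ collapse both implication sets to $\{1\}$, and (OP) yields $u\leq v$; conversely, instantiating (MPO) with the premise pair $(1,1)$ and using $1\rightarrow 1=1$ gives $x\rightarrow y=1\Rightarrow x\leq y$, exactly as in the paper. One remark: your appeal to Theorems \ref{bmeg4}, \ref{meg5} and \ref{parocdsmeg5} in (ii) $\Rightarrow$ (i) is superfluous --- the paper's argument there is purely formal and never uses Boolean or orthomodular structure; those theorems enter only afterwards, in Theorem \ref{meg6}, to convert the equivalence into a structural characterization.

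The genuine gap is the forward half of (i) $\Rightarrow$ (ii), namely $x\leq y\Rightarrow x\rightarrow y=1$. You reduce it to ``extracting orthocomplementedness from (MPO) by a clever substitution'' but never exhibit the substitution, and your alternative route via $u\rightarrow u=1$ is, as you yourself note, circular. The missing idea is the unconditional identity $1\rightarrow z\approx z$, which holds for all five implications with no orthocomplementedness assumption (Proposition \ref{bprop1}(iii), Proposition \ref{prop1}(ii), Corollary \ref{cor1N}(ii), Proposition \ref{SasDisprop}(iv), Corollary \ref{corth1}(iv)). It gives $x\rightarrow y=1\rightarrow(x\rightarrow y)$, and applying (MPO) with premise pair $(x,y)$ and conclusion pair $(1,z)$ for $z\in x\rightarrow y$ forces $1\leq z$, i.e.\ $x\rightarrow y=\{1\}$; no detour through Theorems \ref{bnocmeg5}, \ref{knocmeg5}, \ref{ocmeg5} is needed (and any substitution deriving $x\vee x'=1$ from (MPO) would in effect rest on the same identity, e.g.\ $x\rightarrow_S x=\{x\vee x'\}=1\rightarrow_S(x\vee x')$). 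Your worry about the set-ordering convention is well placed precisely at this step, and it is the one point where the paper itself is terse: under the all--all reading of $A\leq B$ the hypothesis $x\rightarrow y\leq 1\rightarrow z=\{z\}$ is immediate only when $x\rightarrow y$ is a singleton (as it is for $\rightarrow_S$ and $\rightarrow_D$ when $x\leq y$); for the genuinely multi-valued implications $\rightarrow_C,\rightarrow_K,\rightarrow_N$ the instantiation goes through under the $\leq_2$ reading, since $z\in x\rightarrow y$ itself witnesses $x\rightarrow y\leq_2\{z\}$, and all the other steps of the proof remain valid under that reading. Until you supply this identity together with the explicit instantiation, your plan for the forward half is a placeholder rather than a proof.
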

\begin{proof} (i) $\Rightarrow$ (ii): Let $x, y\in P$. Assume first that 
	$x\leq y$. From propositions and corollaries \ref{bprop1}, \ref{prop1},  \ref{cor1N}, \ref{SasDisprop}, and \ref{corth1}, 
	respectively, we obtain that $x\rightarrow y=1\rightarrow (x\rightarrow y)$.
	Hence $1\leq x\rightarrow y$ by (MPO). Suppose now that $1\leq x\rightarrow y$. 
	Then $1=1\rightarrow 1$ and $1\leq 1$. Again by (MPO), we have $x\leq y$. 
	
	 (ii) $\Rightarrow$ (i): Let $x, y, u, v\in P$. Assume that 
	 $x\leq y$ and $x\rightarrow y\leq u\rightarrow v$. Then by (OP), we have 
	 $x\rightarrow y=1$. Hence $u\rightarrow v=1$, i.e., again by (OP), we conclude 
	 that $u\leq v$.	
\end{proof}

An immediate consequence of Theorems \ref{bmeg4}, \ref{meg5}, 
\ref{parocdsmeg5}, and 
\ref{modus} is:

\begin{theorem}\label{meg6}
	Let $\mathbf P=(P,\leq,{}',0,1)$ be an orthogonal 
	lub-complete poset and  $\rightarrow\in \{\rightarrow_K, \rightarrow_N, \rightarrow_S, \rightarrow_D\}$. Then the following holds. 
	\begin{enumerate}[{\rm(i)}]
		\item $\mathbf P$ is Boolean if and only if  {\rm(MPO)} holds for $\rightarrow_C$ in $\mathbf P$.
		\item $\mathbf P$ is an orthomodular  poset if and only if  {\rm (MPO)} holds for $\rightarrow$ in $\mathbf P$.
		\end{enumerate}
\end{theorem}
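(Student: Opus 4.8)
The plan is to obtain Theorem~\ref{meg6} purely by chaining together equivalences that have already been established, since Theorem~\ref{modus} has reduced the modus ponens rule (MPO) to the order property (OP) for each of the five implications, and the relevant instances of (OP) have in turn been characterized structurally in Theorems~\ref{bmeg4}, \ref{meg5}, and \ref{parocdsmeg5}. No new computation with the operators $\rightarrow_i$ is needed; the work has been done in those four statements, and what remains is to compose them correctly.

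First I would treat part (i). Fixing $\rightarrow=\rightarrow_C$, Theorem~\ref{modus} gives that (MPO) holds for $\rightarrow_C$ in $\mathbf P$ if and only if the order property
$$x\leq y \text{ if and only if } x\rightarrow_C y=1$$
holds for all $x,y\in P$. By the equivalence (i)~$\Leftrightarrow$~(ii) of Theorem~\ref{bmeg4}, this order property is itself equivalent to $\mathbf P$ being a Boolean algebra. Composing the two equivalences yields that $\mathbf P$ is Boolean if and only if (MPO) holds for $\rightarrow_C$, which is exactly part (i).

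For part (ii), I would run the identical argument uniformly for each $\rightarrow\in\{\rightarrow_K,\rightarrow_N,\rightarrow_S,\rightarrow_D\}$. Again Theorem~\ref{modus} converts (MPO) for $\rightarrow$ into the order property (OP) for the same arrow. For the Kalmbach and non-tolens implications the equivalences (i)~$\Leftrightarrow$~(ii) and (i)~$\Leftrightarrow$~(iii) of Theorem~\ref{meg5} identify (OP) with $\mathbf P$ being an orthomodular poset; for the Sasaki and Dishkant implications the corresponding equivalences of Theorem~\ref{parocdsmeg5} do the same. Hence for every one of these four implications, (MPO) holds if and only if $\mathbf P$ is orthomodular, which is part (ii).

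The only point requiring care---and the nearest thing to an obstacle---is bookkeeping rather than mathematics: one must check that the four quantum arrows are each covered by exactly one of Theorems~\ref{meg5} and \ref{parocdsmeg5} (two apiece), that Theorem~\ref{modus} applies to all five arrows simultaneously (which it does, being stated for the whole set $\{\rightarrow_C,\rightarrow_K,\rightarrow_N,\rightarrow_S,\rightarrow_D\}$), and that ``Boolean'' in part (i) is read as ``Boolean algebra'' so as to match the hypothesis of Theorem~\ref{bmeg4}; this reading is harmless here, since by Theorem~\ref{bmeg3} the notions of Boolean poset and Boolean algebra coincide for an orthogonal lub-complete poset. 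With these identifications in place, Theorem~\ref{meg6} follows as a direct corollary of the four cited results.
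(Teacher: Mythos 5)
Your proposal is correct and matches the paper exactly: the paper presents Theorem~\ref{meg6} as ``an immediate consequence of Theorems~\ref{bmeg4}, \ref{meg5}, \ref{parocdsmeg5}, and \ref{modus}'', which is precisely the chain of equivalences you compose (and your side remark that ``Boolean'' may be read as ``Boolean algebra'' via Theorem~\ref{bmeg3} is sound bookkeeping consistent with the paper's usage).
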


\section{Conclusions}\label{Conclusions}

Our study shows that in the same way as for binary orthologics in \cite{mphpa98}, 
the assumption of order property or modus ponens rule for classical implication turns orthologic into classical logic. Similarly, the premise of order property 
or modus ponens rule for Kalmbach implication 
or non-tollens implication   or Dishkant implication  or Sasaki  implication turns orthologic into quantum logic.

For future work, there are a number of open problems that we plan to address.
In particular, we can mention the following ones: 

\begin{enumerate}
	\item We intend to represent those orthomodular posets such that (AP) holds 
	for $\rightarrow_K$ or $\rightarrow_N$, respectively.
	\item  We intend to study whether the order property of the 
	remaining one quantum implication introduced by Kalmbach 
	guarantees that we obtain a quantum logic.
\end{enumerate}

\section*{Acknowledgment}

Support of the research of all authors by the Austrian Science Fund (FWF), project I~4579-N, and the Czech Science Foundation (GA\v CR), project 20-09869L, entitled ``The many facets of orthomodularity'' is gratefully acknowledged. The first author was also supported by the project MUNI/A/1099/2022 by Masaryk University.


\end{document}